\newtheorem{theorem}{Theorem}[section]
\theoremstyle{definition}
\newtheorem{definition}[theorem]{Definition}
\theoremstyle{remark}
\newtheorem{remark}[theorem]{Remark}
\numberwithin{equation}{section}
\begin{document}

\title[Distance comparison principle]{Distance comparison principle and Grayson type theorem in the three dimensional curve shortening flow}

\author{Siming He}
\address{Department of Mathematics, University of Maryland, College Park, College Park, MD, 20742 }
\email{simhe@math.umd.edu; simhe@zju.edu.cn}

\subjclass[2000]{54C40, 53C42}

\date{}

\keywords{Curve shortening flow, Distance comparison principle,
Grayson type theorem.}

\begin{abstract}In this paper, we use the distance comparison
principle, first been developed by G. Huisken, to study the spatial
curve shortening flow. We have got the result that if the initial
curve is the helix, then the local minimum of the ratio of the
extrinsic and intrinsic distance is non-decreasing. And we have
proved a Grayson type theorem for a variety of spatial curves.
\end{abstract}

\maketitle

\section{Introduction}

\label{intro}Let $\gamma_0:\Gamma^1\rightarrow M^m$ be a smooth immersion from a 1-dimensional Remannian manifold $\Gamma$ without boundary to an m-dimensional Riemannian manifold $M$. Consider a one-parameter family of smooth immersions $\gamma:\Gamma\times[0,T)\rightarrow M$ satisfying :
\begin{eqnarray}
\label{CSF}\left\{
\begin{array}{ll}
\frac{\partial}{\partial t}\gamma=k\cdot \vec N,\\
\gamma(\cdot,t)=\gamma_0,
\end{array}\right.
\end{eqnarray}
where $k$ is the geodesic curvature of the curve $\gamma_t(\Gamma)$ and $\gamma_t(x)=\gamma(x,t)$. $\vec N$ is the normal vector of the curve $\gamma_t$. In three dimensional Euclidean spaces, $\vec N$ is the principal normal vector of the spatial curve. We call $\gamma:\Gamma\times[0,T)\rightarrow M$ the curve shortening flow.

If $M^m=\mathbf{R}^2$,then (\ref{CSF}) is called the planar curve shortening flow. If $M^m=\mathbf{R}^3$, (\ref{CSF}) is the spatial curve shortening flow.

The planar curve shortening flow has been thoroughly studied in the last thirty years. The asymptotic behavior of the solutions and the classification of the singularities have attracted much attention.

The following theorems concerning the asymptotic behaviors are famous:

\begin{theorem}(Grayson \cite{Gr87})If the initial curve is closed and imbedded, the solution of the planar curve shortening flow (\ref{CSF}) must become convex before it reaches the maximum existing time T.
\end{theorem}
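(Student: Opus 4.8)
The plan is to follow Huisken's distance comparison method. Parametrize $\gamma_t$ over a fixed circle $\Gamma=\mathbf{R}/L_0\mathbf{Z}$ and, for a pair of points $p\neq q$ in $\Gamma$, let $\ell(p,q,t)$ be the shorter arclength along $\gamma_t$ between $\gamma_t(p)$ and $\gamma_t(q)$, let $L(t)$ be the total length, and let $d(p,q,t)=\abs{\gamma_t(p)-\gamma_t(q)}$ be the chord (extrinsic) distance. Introduce the model comparison function $\psi(\ell,L)=\frac{L}{\pi}\sin\!\left(\frac{\pi\ell}{L}\right)$, the chord length of the round circle of perimeter $L$, and set
\[
e(t)=\inf_{p\neq q}\frac{d(p,q,t)}{\psi(\ell(p,q,t),L(t))}.
\]
First I would record short-time existence for (\ref{CSF}) and that embeddedness is preserved (curves cannot develop self-intersections before a singular time), so that $e(0)>0$; since $d/\psi\to 1$ as $\ell\to 0$, the infimum is attained at an interior pair and $e(t)\le 1$.

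The core step is to show $e(t)$ is non-decreasing on $[0,T)$. This is a parabolic maximum-principle (Hamilton three-point) argument on $\Gamma\times\Gamma\times[0,T)$. I would compute the evolution of $d$, $\ell$ and $L$ under the flow: $\partial_t L=-\int k^2\,ds\le 0$, $\partial_t\ell$ in terms of the turning angles at the endpoints, and $\partial_t d$ in terms of $k$ and the unit tangents $T_p,T_q$ and the chord direction $w=(\gamma_t(p)-\gamma_t(q))/d$. At a spatial minimizing pair the first-order conditions force $T_p\cdot w=T_q\cdot w=\cos(\pi\ell/L)$, exactly as on the model circle, and the second-order (nonnegative Hessian) conditions in the two spatial directions combine to show that the time derivative of the minimum is $\ge 0$. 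An ODE comparison with the circle then gives $e(t)\ge e(0)>0$ for all $t\in[0,T)$, i.e. a scale-invariant chord-arc bound $d\ge e(0)\,\psi(\ell,L)$.

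With this bound in hand I would rule out any singularity other than collapse to a round point. If $T<\infty$ is maximal, the enclosed area tends to $0$ and $\max_{\Gamma}\abs{k}\to\infty$; rescale $\tilde\gamma_t=(2(T-t))^{-1/2}\gamma_t$. Since the chord-arc estimate is scale invariant it passes to any blow-up limit, which by Huisken's monotonicity formula and standard compactness is a smooth embedded homothetically shrinking solution obeying the same bound; by the Abresch--Langer classification the only such curve is the round circle. Hence the rescaled flow converges smoothly to a circle, so $k>0$ on $\gamma_t$ for all $t$ sufficiently close to $T$, i.e. the curve becomes convex before time $T$.

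The main obstacle I expect is the maximum-principle step: one must treat the nonlocal term $L(t)$ correctly, verify the infimum is never lost to the degenerate diagonal $\ell\to 0$, handle non-uniqueness of the minimizing pair and the mere Lipschitz-in-time regularity of $e$ (differentiating at a.e.\ $t$ and applying the comparison at a minimizing pair there), and carry out the trigonometric bookkeeping that turns the second-order conditions into the correct sign. The blow-up step is then routine but relies on the usual monotonicity and Langer-type a priori estimates to extract and identify the limit.
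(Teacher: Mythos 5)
You should note first that the paper does not prove this statement at all: it is quoted as background and attributed to Grayson \cite{Gr87}, so there is no ``paper proof'' to compare against. What you have sketched is not Grayson's original argument but Huisken's distance comparison proof \cite{Hui98} --- which happens to be exactly the method the paper adapts to space curves in its later sections, so your choice of route is natural and, in outline, a known correct way to obtain the theorem. In the planar second variation step, the reason the maximum principle closes is the fact (implicit in your ``trigonometric bookkeeping'') that at a minimizing pair with $e_1\neq e_2$ the first variation identities force $e_1+e_2$ to be parallel to $\omega$, so the terms $\tfrac{1}{d\,l}\left(|e_1+e_2|^2-\langle e_1+e_2,\omega\rangle^2\right)$ cancel; this is precisely the cancellation that fails in $\mathbf{R}^3$ and motivates the present paper, and it deserves to be stated explicitly rather than absorbed into ``bookkeeping.''

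The genuine gap is in your blow-up step. You assert that any blow-up limit is, ``by Huisken's monotonicity formula and standard compactness, a smooth embedded homothetically shrinking solution,'' and then invoke Abresch--Langer. That is only valid for Type~I singularities. For Type~II singularities the parabolic rescaling along a suitable sequence of points and times produces an eternal convex solution, identified via Hamilton's Harnack inequality as the Grim Reaper (this is the classification the paper records as its Theorem~1.4, from \cite{Hui90},\cite{Alt91}); the monotonicity formula does not hand you a homothetic limit in this case. The whole point of the chord-arc bound $d\ge e(0)\,\psi(\ell,L)$ is that, being scale invariant, it passes to this Type~II limit and contradicts the Grim Reaper's geometry (bounded chord distance between points of unboundedly large arc distance), thereby excluding Type~II altogether; only after that does the Type~I/Abresch--Langer/embeddedness argument give the circle. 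Without splitting into the two types and running the exclusion argument, the proof is incomplete. Two smaller points: ``the enclosed area tends to $0$ and $\max|k|\to\infty$'' at $T$ is not an a priori fact (only curvature blow-up is immediate; the area statement is a consequence, not an input), and preservation of embeddedness should be obtained from the monotonicity of $e(t)$ itself rather than assumed beforehand, or the argument becomes circular.
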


\begin{theorem}(Hamilton-Gage \cite{G&H86})If the initial curve is closed, convex and imbedded, the solution of (\ref{CSF}) remains convex in the process, and the solution converges to a round point.
\end{theorem}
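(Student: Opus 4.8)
\noindent Here is how I would prove this classical result, following the standard programme for convex planar curve shortening.

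\emph{Step 1 (curvature equation and preservation of convexity).} Writing $s$ for the time-dependent arclength along $\gamma_t(\Gamma)\subset\mathbf R^{2}$, the commutation relation $[\partial_t,\partial_s]=k^{2}\,\partial_s$ leads, by a direct computation from (\ref{CSF}), to
\[
\frac{\partial k}{\partial t}=\frac{\partial^{2} k}{\partial s^{2}}+k^{3} .
\]
The reaction term $k^{3}$ has the favorable sign, so at a spatial minimum $k_{ss}\ge 0$ and the parabolic maximum principle gives $\frac{d}{dt}\min_\Gamma k(\cdot,t)\ge\bigl(\min_\Gamma k(\cdot,t)\bigr)^{3}$; hence $k>0$ is preserved and convexity persists on the maximal existence interval $[0,T)$. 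Once the solution is strictly convex I would take the turning angle $\theta\in\mathbf R/2\pi\mathbf Z$ of the unit tangent as spatial variable, under which $k=k(\theta,t)$ solves the quasilinear equation $\partial_t k=k^{2}\,\partial_\theta^{2}k+k^{3}$ on a fixed domain.

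\emph{Step 2 (finite extinction and convergence to a point).} By Gauss--Bonnet $\int_{\gamma_t}k\,ds=2\pi$, so the enclosed area obeys $A'(t)=-2\pi$, whence $A(t)=2\pi(T-t)$ and $T=A(0)/2\pi<\infty$; likewise $L'(t)=-\int_{\gamma_t}k^{2}\,ds=-\int_0^{2\pi}k\,d\theta<0$. Using Gage's isoperimetric inequality $\int_0^{2\pi}k\,d\theta\ge\pi L/A$ valid for smooth closed convex curves, I get
\[
\frac{d}{dt}\Bigl(\frac{L^{2}}{A}\Bigr)=\frac{2L}{A}\Bigl(\frac{\pi L}{A}-\int_0^{2\pi}k\,d\theta\Bigr)\le 0 ,
\]
so $4\pi\le L^{2}(t)/A(t)\le L^{2}(0)/A(0)$ throughout (the lower bound is the ordinary isoperimetric inequality). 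Since $A(t)\to 0$ as $t\to T$, this forces $L(t)\to 0$; because $\mathrm{diam}\,\gamma_t\le L(t)/2$ for convex curves and later curves remain inside earlier enclosing disks by the avoidance principle, the $\gamma_t$ shrink to a single point $p_{0}\in\mathbf R^{2}$.

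\emph{Step 3 (the a priori curvature estimate --- the main obstacle).} The heart of the theorem is to show the normalized curvature becomes constant, and I expect this to be by far the hardest part. Setting $\tilde k(\theta,t)=\sqrt{2(T-t)}\,k(\theta,t)$, the goal is a uniform two-sided bound $0<c\le\tilde k\le C<\infty$ on $[0,T)$ --- equivalently a uniform bound on $k_{\max}/k_{\min}$ together with $k_{\max}(t)=O\bigl((T-t)^{-1/2}\bigr)$. Starting from $\partial_t k=k^{2}\partial_\theta^{2}k+k^{3}$, one derives integral estimates controlling $\int_0^{2\pi}k^{2}\,d\theta$ (and possibly a related entropy-type quantity) in terms of the bound on $L^{2}/A$ from Step 2, together with a companion estimate for $1/k$; a parabolic $L^{p}$--$L^{\infty}$ (De Giorgi--Nash--Moser / Stampacchia) iteration then upgrades these into the pointwise bounds on $\tilde k$.

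\emph{Step 4 (roundness and convergence).} Granted Step 3, rescale $\gamma_t$ about $p_{0}$ by the factor $(2(T-t))^{-1/2}$, equivalently normalize the enclosed area to $\pi$, obtaining curves $\tilde\gamma_\tau$ with $\tau=-\tfrac12\log(T-t)$. These have constant enclosed area $\pi$, lengths bounded above and below by Step 2, and curvature pinched by Step 3, hence uniformly bounded geometry; parabolic regularity then gives uniform $C^{\infty}$ bounds and subsequential smooth convergence $\tilde\gamma_{\tau_{i}}\to\gamma_\infty$ to a smooth closed strictly convex curve. Since $L^{2}/A$ is scale-invariant, the monotonicity of Step 2 becomes $\frac{d}{d\tau}\bigl(L^{2}/A\bigr)=\frac{2\tilde L}{\tilde A}\bigl(\frac{\pi\tilde L}{\tilde A}-\int_0^{2\pi}\tilde k\,d\theta\bigr)\le 0$, which is integrable on $\tau\in[0,\infty)$ while $\tilde L/\tilde A$ is bounded below; therefore the rescaled Gage deficit $\int_0^{2\pi}\tilde k\,d\theta-\pi\tilde L/\tilde A$ tends to $0$ along a sequence, and by the equality (rigidity) case of Gage's inequality the limit $\gamma_\infty$ must be a round circle, so $L^{2}(t)/A(t)\to 4\pi$. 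Finally, uniqueness of the rescaling limit promotes subsequential to full convergence: $\gamma_t$ stays convex on $[0,T)$, contracts to the point $p_{0}$, and is asymptotically round, which is the assertion of the theorem.
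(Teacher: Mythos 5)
The paper does not prove this statement at all: it is quoted as the classical Gage--Hamilton theorem, cited to \cite{G&H86}, and used as background, so there is no in-paper argument to compare yours against. Judged on its own, your outline follows the standard Gage--Hamilton programme, and Steps 1 and 2 are essentially complete and correct: the evolution $k_t=k_{ss}+k^3$ with the maximum principle preserves convexity, $A'(t)=-2\pi$ gives finite extinction time, and Gage's inequality $\int k^2\,ds\ge \pi L/A$ yields the monotonicity of $L^2/A$ and shrinking to a point.

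The genuine gap is Step 3, which you yourself flag as the heart of the matter but do not prove. The uniform two-sided bound on the rescaled curvature $\tilde k=\sqrt{2(T-t)}\,k$ --- equivalently the bound on $k_{\max}/k_{\min}$ together with the Type-I estimate $k_{\max}\le C(T-t)^{-1/2}$ --- is exactly the technical core of \cite{G&H86}, where it is obtained through the entropy estimate on $\int k\log k\,ds$, a geometric lemma controlling the curvature via the median curvature $k^{*}$, and subsequent derivative estimates; your proposal replaces this with the announcement that ``integral estimates'' plus a De Giorgi--Nash--Moser iteration ``then upgrades these into the pointwise bounds,'' without deriving any of those estimates or verifying that the iteration closes. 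Since Step 4's compactness, pinching, and rigidity argument all rest on the bounds of Step 3, and since the final passage from subsequential to full convergence is likewise only asserted (``uniqueness of the rescaling limit promotes subsequential to full convergence''), the proposal as written is a correct roadmap of the known proof rather than a proof: the decisive a priori estimates are missing.
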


Before we introduce the results in classification of singularities, we need the following definition:

\begin{definition}
(Type I singularity) If $\lim_{t\rightarrow T}\parallel k^2\left (\cdot,t\right )\parallel_{\infty}\left ( T-t\right ) $ is bounded, we call the singularity formed a Type I singularity;

(Type II singularity) If $\lim_{t\rightarrow T}\parallel k^2\left (\cdot,t\right )\parallel_{\infty}\left ( T-t\right ) $ is unbounded, we call the singularity formed a Type II singularity.
\end{definition}

Then we come to the complete classification of singularities. Notice that this classification works for both planar and spatial curve shortening flows:

\begin{theorem}(\cite{Hui90},\cite{Alt91})

(Type I singularity) If $\lim_{t\rightarrow T}\parallel k^2\left (\cdot,t\right )\parallel_{\infty}\left ( T-t\right ) $ is bounded, then $\gamma$ is asymptotic to a planar solution which is moving by homothety. These planar solutions are given by Abresch and Langer;

(Type II singularity) If $\lim_{t\rightarrow T}\parallel k^2\left (\cdot,t\right )\parallel_{\infty}\left ( T-t\right ) $is unbounded, then there exists a sequence of points and times ${p_n,t_n}$ on which the curvature blows up such that:

(1) a rescaling of the solution along this sequence converges in $C^{\infty}$ to a planar, convex limiting solution $\gamma_\infty$;

(2) $\gamma_\infty$ is a solution which moves by translation called the Grim Reaper.
\end{theorem}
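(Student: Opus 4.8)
The plan is to carry out a parabolic blow-up analysis at the maximal time $T$, treating the two cases separately, with Huisken's monotonicity formula as the common engine: in each case one rescales the flow about a carefully chosen sequence of spacetime points, extracts a smooth subsequential limit by interior estimates, identifies the limit as a distinguished solution (self-similar in Type~I, eternal in Type~II), and then classifies such solutions. For space curves there is the additional step of showing the blow-up limit lies in a plane.

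\textbf{Type~I.} Since $\|k^2(\cdot,t)\|_\infty\,(T-t)\le C$, I would rescale parabolically about the singular point $p_0$, setting $\gamma_\lambda(\cdot,s)=\lambda^{-1}\bigl(\gamma(\cdot,T+\lambda^2 s)-p_0\bigr)$ for $s\in[-\lambda^{-2}T,0)$. The Type~I bound gives $|k_\lambda|^2\le C/(-s)$ uniformly in $\lambda$, and interior estimates (following Ecker--Huisken and Hamilton, controlling also the torsion) bound all higher derivatives; letting $\lambda_j\to0$ yields $C^\infty_{\mathrm{loc}}$ convergence of a subsequence to an ancient flow on $s\in(-\infty,0)$. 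Huisken's monotonicity formula, which for a curve reads $\frac{d}{dt}\int\rho_{p_0,T}\,d\mu=-\int\bigl|\vec k-\tfrac{x^\perp}{2(T-t)}\bigr|^2\rho\,d\mu$, shows the Gaussian density at $(p_0,T)$ is finite; passing to the limit makes the weighted integral of $|\vec k-\tfrac{x^\perp}{2s}|^2$ vanish, so the limit is a self-shrinker, $k=-\tfrac{1}{2}\langle x,\vec N\rangle$ at $s=-1$. It then remains to classify self-shrinkers: for planar curves these are exactly the line through the origin, the round circle, and the Abresch--Langer family of closed immersed curves; for space curves one must first argue that the shrinker is planar, which comes from writing the shrinker equation in the Frenet frame and showing the torsion vanishes (in substance Altschuler's torsion lemma). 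Hence $\gamma$ is asymptotic to a homothetically shrinking Abresch--Langer curve.

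\textbf{Type~II.} Here $\|k^2(\cdot,t)\|_\infty\,(T-t)\to\infty$, and I would use Hamilton's point-picking: on $[0,T-\tfrac{1}{n}]$ choose $(p_n,t_n)$ maximising $k^2(p,t)\bigl(T-\tfrac{1}{n}-t\bigr)$, and set $Q_n=k^2(p_n,t_n)$; the Type~II hypothesis forces $Q_n\to\infty$ and the maximality forces $t_n\to T$. Rescaling by $Q_n$ and recentring at $(p_n,t_n)$, the choice of $(p_n,t_n)$ makes the rescaled curvatures bounded by roughly $1$ on time intervals $(-\alpha_n,\omega_n)$ with $\alpha_n,\omega_n\to\infty$, so interior estimates again give $C^\infty_{\mathrm{loc}}$ convergence of a subsequence to an \emph{eternal} solution $\gamma_\infty$ with $|k_\infty|\le 1=|k_\infty|(0,0)$, i.e.\ with curvature attaining a positive spacetime maximum; monotonicity bounds its entropy. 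For space curves one shows $\gamma_\infty$ is planar using Altschuler's estimate $|\tau|/\sqrt{Q_n}\to0$ along the blow-up sequence. A planar eternal solution of the flow attaining a positive spacetime maximum of curvature must, by the equality case of Hamilton's differential Harnack inequality, be the translating soliton---the Grim Reaper $y=-\log\cos x$---which is convex.

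\textbf{Expected main obstacle.} The interior estimates and the extraction of limits are routine once the curvature (and, for space curves, torsion) bounds are in place. The substantive ingredients are: (i) the classification of planar self-shrinkers, i.e.\ the Abresch--Langer ODE analysis; (ii) for space curves, the two planarity statements, which require controlling the torsion under the flow and under rescaling---the torsion equation degenerates where $k=0$, so genuine care is needed; and (iii) the Type~II rigidity, i.e.\ deriving from Hamilton's Harnack inequality that an eternal, maximum-attaining, convex planar solution is exactly the Grim Reaper, together with the separate convexity estimate needed to know the blow-up limit is convex. I expect (ii) to be the hardest single point: showing that the blow-up limits, which a priori are honest space curves, in fact collapse onto planar curves.
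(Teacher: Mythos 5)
This statement is not proved in the paper at all: it is quoted as a known classification result, with the proof delegated to \cite{Hui90} and \cite{Alt91}, so there is no in-paper argument to compare your sketch against. Measured against those references, your outline is essentially the standard one and is sound at the level of a sketch: Type~I via parabolic rescaling, the Type~I curvature bound plus interior estimates to extract an ancient limit, Huisken's monotonicity formula to force the limit to be a self-shrinker, and the Abresch--Langer classification; Type~II via Hamilton's point-picking, extraction of an eternal limit with curvature attaining its spacetime maximum, and the equality case of Hamilton's Harnack inequality to identify the Grim Reaper. The genuinely three-dimensional content is exactly where you place it: Altschuler's planarity analysis, i.e.\ controlling the torsion (whose evolution degenerates at inflection points where the Frenet frame breaks down) so that the blow-up limits collapse onto planar solutions. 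Two small points worth making explicit if you were to flesh this out: convexity of the Type~II limit comes almost for free in the space-curve setting, since the space-curve curvature is nonnegative by convention, so the planar limit has $k\ge 0$ and the strong maximum principle upgrades this to $k>0$; and in the Type~I case the asymptotic statement requires a little more than subsequential convergence of rescalings (one must rule out the flat limit using a Gaussian density/lower curvature bound at the singular point and address uniqueness of the limit), which Huisken's and Altschuler's arguments handle but your sketch passes over silently.
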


In this paper, we want to find the Grayson type theorem for the spatial curve shortening flows. Namely, we want to answer the following question: under what condition can we ensure that the singularities formed during the spatial curve shortening flow are round points?

There are three possible methods to solve such kind of problems:

(1) The Isoperimetric argument introduced by R. Hamilton (\cite{C&Z01}). However, we could not effectively apply this tool to the spatial curve shortening flow, because we can hardly calculate the area enclosed by the curve and the Sturm Oscillation Theorem, which is vital in the whole argument, can not be directly applied in the three dimensional case;

(2) Huisken's distance comparison principle. In the planar case, based on the complete classification of singularities, Huisken studied the development of the ratio of extrinsic distance and intrinsic distance, and gave us an easy proof of the Grayson theorem (\cite{Hui98}). This method is comparatively better in the three dimensional case;

(3) The techniques developed by Ben Andrews (\cite{And&Ba}) and G. Huisken (\cite{Hui84}) in their research of Mean Curvature Flows. These methods face obstacles in the lower dimensional case because the Myers theorem can not be true on a spatial curve. The Helix is the counter-example.

In a word, Huisken's distance comparison principle is better in the spatial curve shortening flow study.

Now let us give the strategy to find the Grayson type theorem in spatial curve shortening flows.
First of all, we have already understood the classification of all the singularities formed in the flow. The type I singularities can only be the Abresch-Langer Curves.(\cite{Alt91}) If we pose the following condition on the initial curve $\gamma_0$:

\begin{eqnarray}
\label{type I condition}
    \int_{\gamma_0}|k|ds<4\pi,
\end{eqnarray}

we will get the result that all the type I singularities formed must be round points. Because the absolute total curvature is non-increasing in the curve shortening flow(\cite{Alt91}), the type I singularity formed during the flow cannot be other types of Abresch-Langer Curves (they have absolute total curvature greater or equal to $4\pi$ ). So we will have the following theorem:

\begin{theorem}
If the initial curve satisfies (\ref{type I condition}), and the singularity formed in the spatial curve shortening flow is of type I, then the curve shrinks to a round point when approaching the maximum existing time T.
\end{theorem}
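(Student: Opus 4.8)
The plan is to reduce the statement to three ingredients, all of which are available from the discussion and references above, and to put them together.

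First I would identify the singularity model. Since the singularity forming at time $T$ is assumed to be of Type~I, the classification theorem quoted above applies: after the standard parabolic rescaling centred at the singular point and time, the normalized flow converges smoothly to a homothetically shrinking \emph{planar} solution, and these are exactly the Abresch--Langer curves (\cite{Hui90}, \cite{Alt91}). The limiting curve $\gamma_\infty$ is a closed curve, and the Abresch--Langer family splits into two classes: the round circle, which has total absolute curvature $2\pi$ and which corresponds precisely to the original curve contracting to a round point; and a countable family of immersed, non-embedded closed curves, each with total absolute curvature at least $4\pi$.

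Next I would propagate the curvature hypothesis to the model. The key point is that $\int_{\gamma_t}|k|\,ds$ is non-increasing along the curve shortening flow (Altschuler, \cite{Alt91}) and is scale invariant (under a homothety by $\lambda$, curvature scales by $\lambda^{-1}$ and arclength by $\lambda$). Hence the hypothesis (\ref{type I condition}), i.e.\ $\int_{\gamma_0}|k|\,ds<4\pi$, forces $\int_{\gamma_t}|k|\,ds<4\pi$ for all $t\in[0,T)$, and the same strict bound persists along the rescaled flow; passing to the limit through the smooth convergence of the first step gives $\int_{\gamma_\infty}|k|\,ds\le 4\pi$. Combining this with the dichotomy of the first step, an Abresch--Langer curve with total absolute curvature strictly below $4\pi$ can only be the round circle, so the Type~I singularity is modelled on the shrinking circle, i.e.\ the curve shrinks to a round point as $t\to T$, which is what had to be shown.

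I expect the only delicate point to be the passage to the limit in the second step: total absolute curvature is merely lower semicontinuous under weak limits in general, so one must use the compactness of the Type~I model together with the uniform higher-order curvature estimates underlying the convergence theorem — rather than treating that theorem as a black box — to exclude any loss of curvature in the limit. The remaining arguments are routine assembly of results already available.
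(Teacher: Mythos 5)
Your proposal is correct and follows essentially the same route as the paper: Type~I singularities are modelled on Abresch--Langer curves, the total absolute curvature $\int|k|\,ds$ is non-increasing (and scale invariant) along the flow, and since every non-circular Abresch--Langer curve has total absolute curvature at least $4\pi$, the hypothesis $\int_{\gamma_0}|k|\,ds<4\pi$ forces the limit to be the round circle. Your additional remark on the passage to the limit is handled by noting that the rescaled total curvatures stay bounded by $\int_{\gamma_0}|k|\,ds<4\pi$, so no boundary case $=4\pi$ can arise; the paper itself treats this step implicitly.
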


\begin{remark}
Because the spatial curve might not remain imbedded during the curve shortening flow (\cite{Alt91}), we cannot use the classical argument, which takes place in the planar curve shortening flow study, to give our result.
\end{remark}

Secondly, we must use certain methods to rule out the possibility of emerging type II singularities. However, when we tried to do that, difficulties happens. The Huisken's distance comparison argument might come into trouble. We discovered that without more information about the shape of the spatial curve, one could not get the result we desired. Our observation based on the analysis of the evolution of the helix under the curve shortening flow.

our main theorem I is as follows:

\begin{theorem}\label{main theorem 1}
Given a curve shortening flow (\ref{CSF}) with $\gamma_0(u)=(a\cos(u),a\sin(u),bu)$ as its initial value. Let $p,q\in\gamma_{t_0}$ be the two points at which the ratio d/l (d denotes the extrinsic metric, l denotes the intrinsic metric) reaches its local minimum,$\forall t_0\in[0,T)$. We always have that:

\begin{eqnarray}
\frac{d}{dt}{\left ( \frac{d}{l} \right )}\ge0
\end{eqnarray}
at $(p,q)$.
\end{theorem}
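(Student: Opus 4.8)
My plan is to follow Huisken's distance comparison strategy adapted to the helix. Write $\gamma_t$ for the evolving curve, and for two points $p=\gamma_t(x)$, $q=\gamma_t(y)$ set $d=d(p,q)=|\gamma_t(x)-\gamma_t(y)|$ and $l=l(x,y,t)$ the intrinsic (arclength) distance along $\gamma_t$ between $x$ and $y$. The starting point is the standard evolution equations: under (\ref{CSF}) one has $\frac{\partial}{\partial t}l = -\int_x^y k^2\,ds$ (the arclength element satisfies $\partial_t\,ds = -k^2\,ds$), and for the chord $w=\gamma_t(x)-\gamma_t(y)$, writing $T_p,T_q$ for the unit tangents at $p,q$ and $k_p\vec N_p, k_q\vec N_q$ for the curvature vectors, $\frac{\partial}{\partial t}d = \langle \frac{w}{|w|}, k_p\vec N_p - k_q\vec N_q\rangle$. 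Dividing, at the local minimum of $d/l$ it suffices to show $l\,\partial_t d - d\,\partial_t l \ge 0$, i.e.
\[
\Big\langle \frac{w}{d}, k_p\vec N_p - k_q\vec N_q\Big\rangle \;\ge\; -\frac{d}{l}\int_x^y k^2\,ds .
\]

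The second step is to exploit that $(p,q)$ is a \emph{local minimum of $d/l$} in the two variables $x,y$. The first-order (spatial) conditions $\partial_x(d/l)=\partial_y(d/l)=0$ give the geometric fact that $\langle w/d, T_p\rangle = \langle w/d, T_q\rangle = \pm\, d/l$ — the chord makes equal angles with the curve at both endpoints, with cosine of the angle equal to $d/l$. This decomposes $T_p$ and $T_q$ into a component along $w/d$ of size $d/l$ and a perpendicular component of size $\sqrt{1-(d/l)^2}$. The second-order condition (the spatial Hessian of $d/l$ is positive semidefinite) is what controls the curvature terms $k_p,k_q$; this is precisely where Huisken's argument squeezes out the inequality in the planar case, and it is the part that must be re-examined in $\mathbf R^3$.

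The third step — and the one where the helix hypothesis enters essentially — is to actually estimate $\langle w/d, k_p\vec N_p\rangle - \langle w/d, k_q\vec N_q\rangle$ against $\frac{d}{l}\int k^2 ds$ using the Hessian inequality. For the planar flow the curvature vector is $\pm k$ times the rotation of the tangent, so $\langle w/d, k\vec N\rangle$ is directly tied to the derivative of the angle function, and the comparison closes cleanly. In $\mathbf R^3$ the principal normal $\vec N$ can point out of the plane spanned by $w$ and the tangents, so there is an extra, a priori unfavourable, term; the reason the theorem is stated only for helix initial data is presumably that along (an evolving) helix — or at least at the critical configuration produced by a helix — the torsion/normal behaviour is rigid enough that this extra term has a sign (or is dominated). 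Concretely, I would: (i) use the explicit symmetry of the helix (it is invariant under the screw-motion one-parameter group, and the flow preserves this symmetry, so $\gamma_t$ is again a helix-type curve) to pin down the relative position of $\vec N_p,\vec N_q$ with respect to the chord $w$; (ii) plug this into the Hessian inequality from step two; (iii) combine with the integral lower bound $\int_x^y k^2 ds \ge$ (something in terms of the turning of the tangent between $p$ and $q$) via Cauchy–Schwarz to absorb the remaining terms.

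The main obstacle I anticipate is exactly step three: in the planar case the second-variation inequality plus the elementary identity relating $k$ to the angle gives the estimate almost for free, but in three dimensions one must quantitatively use the helix geometry to show the out-of-plane component of $k_p\vec N_p-k_q\vec N_q$ does not destroy the inequality. A secondary technical point is justifying that the local minimum is attained at an interior pair with $x\neq y$ and $d>0$ (ruling out degenerate configurations where $l$ equals the full length or the points collide), and handling the endpoint/periodicity bookkeeping, since the helix is a non-compact curve rather than a closed one, so one should work with $d/l$ restricted to bounded arcs or pass to the quotient by the screw symmetry.
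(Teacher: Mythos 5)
There is a genuine gap, and it sits exactly at your step three. Your plan is to close the estimate by feeding the helix geometry into the second-variation (Hessian) inequality at the minimizing pair $(p,q)$, i.e.\ to run Huisken's argument with the local-minimum conditions as the main engine. The paper shows this cannot work for this theorem: using only the first- and second-variation information one gets the lower bound
\[
\frac{d}{dt}\Bigl(\frac{d}{l}\Bigr)\;\ge\;\frac{1}{ld}\Bigl(-\lvert e_1+e_2\rvert^2+\langle e_1+e_2,\omega\rangle^2+\frac{d^2}{l^2}\bigl(\textstyle\int k\,ds\bigr)^2\Bigr),
\]
and for the helix with $m=b^2/a^2$ small the right-hand side is negative for suitable separations $y=u_1-u_2$ (the paper evaluates this function explicitly and finds it changes sign). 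So the ``extra, a priori unfavourable, term'' you flag is not dominated by anything the Hessian inequality provides, and the mechanism you propose to ``squeeze out the inequality'' fails precisely in the regime of small pitch-to-radius ratio; only for $m$ large enough does that route give a conclusion, which is strictly weaker than the theorem.

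The idea that actually proves the theorem is the one you gesture at in step (i), but pushed all the way: since the screw symmetry is preserved, $\gamma(\cdot,t_0)$ is again a helix, so one should abandon the variational conditions entirely and compute $\frac{d}{dt}(d/l)$ in closed form at an \emph{arbitrary} pair of points. Because $k$ is constant along a helix, the H\"older step $\int k^2\,ds\ge \frac{1}{l}\bigl(\int k\,ds\bigr)^2$ is an equality, so
\[
\frac{d}{dt}\Bigl(\frac{d}{l}\Bigr)=\frac{1}{l}\langle\omega,\vec k(q)-\vec k(p)\rangle+\frac{d}{l^3}\Bigl(\int k\,ds\Bigr)^2 ,
\]
and inserting $\vec k=\frac{(-a\cos u,-a\sin u,0)}{a^2+b^2}$, $d^2=a^2(2-2\cos y)+b^2y^2$, $l^2=(a^2+b^2)y^2$ gives
\[
\frac{d}{dt}\Bigl(\frac{d}{l}\Bigr)=\frac{a^2b^2}{ld\,(a^2+b^2)^2}\bigl(y^2-2+2\cos y\bigr)\ge 0,
\]
by the elementary inequality $\cos y\ge 1-\tfrac12 y^2$. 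Note this holds at every pair, so the minimality of $(p,q)$ is never used; the crucial cancellation is between the (negative) chord--curvature term and the $\frac{d}{l^3}\bigl(\int k\,ds\bigr)^2$ term, not anything extracted from the second variation. Your secondary technical worries (noncompactness, degenerate pairs) are real but moot once the inequality is pointwise.
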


\begin{remark}
We can show that, the classical distance comparison argument cannot give us this result.
\end{remark}

In spite of the difficulties encountered, we can prove the following main theorem II:

\begin{theorem}\label{main theorem 2}
If the initial curve $\gamma_0$ is an embedded closed curve on $S^2(0)$, the solution of the spatial curve shortening flow (\ref{CSF}) converges to a round point.
\end{theorem}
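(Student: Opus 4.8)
The strategy is to transplant Huisken's planar distance comparison argument \cite{Hui98} to the spatial flow, using the hypothesis only to confine the flow and to pin down the curvature vector of $\gamma_0$. By the parabolic scaling invariance of (\ref{CSF}) one may take the sphere to be the unit sphere centred at the origin, so $|\gamma_0|\equiv1$. Two elementary facts then follow. First, writing $\Delta$ for the Laplacian in the evolving arclength, the identity $\partial_t\gamma=\partial_s^2\gamma=k\vec N$ gives $(\partial_t-\Delta)|\gamma|^2=-2$ along the flow, so by the maximum principle $\max_\Gamma|\gamma(\cdot,t)|^2\le1-2t$; in particular $\gamma_t\subset B_{\sqrt{1-2t}}$ and $T\le\tfrac12$. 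Second, differentiating $|\gamma_0|^2\equiv1$ twice in arclength yields the Meusnier rigidity $\langle k\vec N,\gamma\rangle\equiv-1$ on the initial curve, i.e. the radial component of its curvature vector is constant. These are the only places the hypothesis enters; in three dimensions they are meant to replace the convexity conclusion that Grayson's theorem supplies in the plane.

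The core is a chord--arc estimate. For $t<T$ and distinct $p,q\in\Gamma$, let $d=|\gamma(p,t)-\gamma(q,t)|$ be the extrinsic distance, $l$ the length of the shorter subarc between the two points, $L(t)$ the total length, and $\psi(l,t)=\tfrac{L(t)}{\pi}\sin\left(\tfrac{\pi l}{L(t)}\right)$; the aim is to show that $Z(t)=\inf_{p\ne q}d/\psi(l,t)$ is non-decreasing. Since $\gamma_0$ is embedded, $Z(0)>0$, and since $d/\psi\to1$ along the diagonal one has $Z\le1$ and any sub-unit value of $Z$ is realised at an interior pair $(p,q)$, to which the parabolic maximum principle on the doubled curve applies. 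The first variation in the two arclength parameters forces the unit chord $\hat w$ to make a common angle $\alpha$ with $\vec T(p)$ and with $\vec T(q)$; the second variation bounds $k(p)\langle\hat w,\vec N(p)\rangle-k(q)\langle\hat w,\vec N(q)\rangle$ from below in terms of $d$, $\langle\vec T(p),\vec T(q)\rangle$, $L$ and $\psi$; and feeding this together with $\partial_td=\langle\hat w,k(p)\vec N(p)-k(q)\vec N(q)\rangle$, $\partial_tl=-\int_{\mathrm{arc}}k^2\,ds$ and $\partial_tL=-\int_\Gamma k^2\,ds$ into $\partial_tZ$ reduces $\partial_tZ\ge0$ to a pointwise inequality among these quantities. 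In the plane $\vec N$ is determined by $\vec T$, so $\langle\hat w,\vec N(\cdot)\rangle=\pm\sin\alpha$ and the inequality is exactly Huisken's. In $\mathbf{R}^{3}$ the principal normals may point anywhere in their normal planes, so $\langle\hat w,\vec N(p)\rangle=\sin\alpha\cos\varphi_p$ and $\langle\hat w,\vec N(q)\rangle=\sin\alpha\cos\varphi_q$ with ``non-planarity angles'' $\varphi_p,\varphi_q$ that need not be $0$ or $\pi$, and this extra freedom is precisely why the classical argument fails (cf. the Remark following Theorem~\ref{main theorem 1}).

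\emph{Controlling $\varphi_p$ and $\varphi_q$ is the crux, and the step I expect to be the main obstacle.} What I would try is to exploit the confinement and the Meusnier rigidity of the first paragraph: the curvature vector starts with radial component $-1$, and because $|\gamma|^2$ solves a heat equation with a constant sink the curvature vector of $\gamma_t$ cannot have rotated far out of the radial--tangential plane before the short subarc between $p$ and $q$ has accumulated appreciable length, which should bound the deviations of $\cos\varphi_p$ and $\cos\varphi_q$ from $\pm1$ by quantities already dominated by $\int_{\mathrm{arc}}k^2\,ds$ --- the very term carrying the good sign in the planar inequality --- so that $\partial_tZ\ge0$ survives with room to spare. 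Granting this, $Z(t)\ge Z(0)>0$ on $[0,T)$, hence (as $\sin x\ge\tfrac2\pi x$ on $[0,\tfrac\pi2]$) a uniform chord--arc bound $d\ge c\,l$, so $\gamma_t$ stays embedded. This bound is scale invariant, so it passes to any blow-up limit; a Type~II singularity would rescale to the Grim Reaper, which carries pairs of points with $l\to\infty$ and $d$ bounded, violating $d\ge c\,l$, so the singularity must be of Type~I. By the classification \cite{Alt91} a rescaling then converges to a homothetically shrinking Abresch--Langer curve, which, being an embedded limit satisfying $d\ge c\,l$, can only be the round circle (every other Abresch--Langer curve is immersed with self-intersections, where $d=0$ for $p\ne q$). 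Hence $\gamma_t$ converges to a round point.
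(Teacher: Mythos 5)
There is a genuine gap, and you flag it yourself: the entire argument hinges on controlling the ``non-planarity angles'' $\varphi_p,\varphi_q$, and for that step you offer only a hope (``what I would try\dots should bound the deviations\dots''), not a proof. The heuristic is unlikely to close as stated: the Meusnier identity $\langle k\vec N,\gamma\rangle=-1$ is derived only at $t=0$, and a one-sided confinement $|\gamma|^2\le 1-2t$ gives no pointwise control of the direction of $\vec N$ at later times; nothing in the maximum principle for $|\gamma|^2$ prevents the principal normal from rotating out of the radial--tangential plane on the scale relevant to the pair $(p,q)$. Indeed the paper's own Section~2 (the helix computation after Theorem~2.2) and the conditional hypothesis (3.1) in Section~3 are there precisely to show that Huisken's comparison inequality genuinely fails for space curves without additional shape information, so a quantitative replacement for $\cos\varphi_p,\cos\varphi_q=\pm1$ cannot be waved through.

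What the paper does instead --- and what your own computation puts within one step of your reach --- is to upgrade the inequality to an identity: since $u=|\gamma|^2$ satisfies $\partial_t u=\partial_s^2 u-2$ with $u(\cdot,0)\equiv1$ on a closed curve, uniqueness (apply the maximum principle to $u-(1-2t)$ from both sides) gives $|\gamma|^2\equiv 1-2t$, i.e.\ the flow stays exactly on the shrinking sphere $S^2_{\sqrt{1-2t}}(0)$. Then the normal curvature component is forced, $k\vec N=k_g\vec Q+k_n\vec n$ with $k_n=1/\sqrt{1-2t}$ and $\vec n$ radial, and after the rescaling $\tilde\gamma=\gamma/\sqrt{1-2t}$, $\tilde t=-\tfrac12\log(\tfrac12-t)$ the equation becomes the intrinsic curve shortening flow of a closed embedded curve on the unit sphere, $\partial_{\tilde t}\tilde\gamma=\tilde k_g\vec Q$. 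At that point the normal-direction ambiguity that blocks your argument disappears entirely, and the Grayson-type theorem for curves on surfaces (the distance comparison of \cite{D&M}, or \cite{Zhu98}) yields convergence to a round point, hence the same for $\gamma$ by homothety. Your concluding blow-up discussion (Grim Reaper excluded by a chord--arc bound, embedded Abresch--Langer limit must be the circle) is fine in outline, but it sits downstream of the unproven crux, so as written the proposal does not prove the theorem.
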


\begin{remark}
This is the Grayson type theorem we are looking for.
\end{remark}

This result, which was finished at May 28th, 2012, will form part of the author¡¯s undergraduate thesis at Zhejiang University, People's Republic of China. He wishes to thank his mentor, Professor Hongwei Xu for his guidance in the author's thesis work. He also wants to thank Entao Zhao for many helpful discussions and insights.

\section{Proof of Main Theorem I}

First, we introduce some notation:

\begin{definition}
The intrinsic distance $d$ and extrinsic distance $l$ are defined as follows:
\begin{equation*}
\begin{split}
&d,l:\gamma\times\gamma\times[0,T]\rightarrow \mathbf{R}\\
&d(p,q,t):=\left | \gamma(p,t)-\gamma(q,t)\right |_{\mathbf{R}^3} \\
&l(p,q,t):=\int_p^q ds_t
\end{split}
\end{equation*}
\end{definition}

Now we can give the following theorem:

\begin{theorem}
 Given a curve shortening flow (\ref{CSF}) with a complete spatial curve as its initial value, and let $p,q\in \gamma_{t_0}$ be the two points at which the ratio d/l reahces its local minimum, $\forall t_0\in [0,T)$. If either of the folowing conditions are satisfied:
 \begin{eqnarray}
  -\left |e_1+e_2 \right| ^2+\langle e_1+e_2,\omega\rangle^2+\frac{d^2}{l^2}\left ( \int_{\gamma_{t_0}}kds_{t_0}\right )^2\ge 0;
  \end{eqnarray}
 \begin{eqnarray}
  e_1=e_2;
  \end{eqnarray}
  then we have that:
  \begin{eqnarray}
  \frac{d}{dt}{\left ( \frac{d}{l} \right )}\ge0
  \end{eqnarray}
at $(p,q)$.
  Here, $e_1,e_2$ denote the unit tangent vectors of the spatial curve $\gamma_{t_0}$ at points $p,q$. $\omega$ is the direction vector of $\overrightarrow{pq} $.
\end{theorem}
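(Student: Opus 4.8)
\emph{Proof strategy.} The plan is to run Huisken's distance comparison scheme: compute the evolution of $l$ and of $d$, extract the first- and second-order information forced by $(p,q)$ being a minimum of $d/l$, and then locate exactly where a hypothesis beyond the planar case is needed. Fix $t_0$, parametrise $\gamma_{t_0}$ by arc length near $p$ and near $q$, and write $\partial_p,\partial_q$ for the two resulting derivatives, $e_1,e_2,\vec N_p,\vec N_q,k_p,k_q$ for the Frenet data there, and $\omega=\bigl(\gamma(q,t_0)-\gamma(p,t_0)\bigr)/d$. Since $\partial_t\gamma=k\vec N$, the arc-length element satisfies $\partial_t(ds)=-k^2\,ds$, so
\[
\partial_t l=-\int_p^q k^2\,ds_{t_0},\qquad
\partial_t d=\bigl\langle\omega,\;k_q\vec N_q-k_p\vec N_p\bigr\rangle,
\]
where $\int_p^q$ denotes the integral over the sub-arc of $\gamma_{t_0}$ joining $p$ to $q$ (this is the integral entering the hypothesis). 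Hence
\[
\frac{d}{dt}\!\left(\frac dl\right)=\frac1l\!\left(\partial_t d+\frac dl\int_p^q k^2\,ds_{t_0}\right),
\]
and the whole statement reduces to showing $\partial_t d\ge-\tfrac dl\int_p^q k^2\,ds_{t_0}$ at $(p,q)$.

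The next step is to use that $d/l$ has an interior local minimum at $(p,q)$, so $p\ne q$. Vanishing of its first derivatives in the two arc-length variables gives the standard identities $\langle\omega,e_1\rangle=\langle\omega,e_2\rangle=d/l$; and because $l$ is affine in these variables, the second-order minimality condition reduces, after inserting the first-order identities, to positive semi-definiteness of the Hessian of $d$ in these coordinates, whose entries a direct Frenet computation gives as
\[
\partial_p^2 d=\frac{1-\langle\omega,e_1\rangle^2}{d}-k_p\langle\omega,\vec N_p\rangle,\quad
\partial_q^2 d=\frac{1-\langle\omega,e_2\rangle^2}{d}+k_q\langle\omega,\vec N_q\rangle,\quad
\partial_p\partial_q d=\frac{\langle\omega,e_1\rangle\langle\omega,e_2\rangle-\langle e_1,e_2\rangle}{d}.
\]
Evaluating this Hessian on the directions $(1,1)$ and $(1,-1)$ and substituting $\langle\omega,e_i\rangle=d/l$ (so $\langle\omega,e_1+e_2\rangle=2d/l$ and $2\pm2\langle e_1,e_2\rangle=|e_1\pm e_2|^2$) converts positive semi-definiteness into
\[
\partial_t d\ \ge\ -\frac{|e_1-e_2|^2}{d}
\qquad\text{and}\qquad
\partial_t d\ \ge\ -\frac{|e_1+e_2|^2-\langle e_1+e_2,\omega\rangle^2}{d},
\]
respectively.

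To conclude, match the appropriate inequality to the hypothesis. If $e_1=e_2$, the first estimate gives $\partial_t d\ge0$, which certainly dominates $-\tfrac dl\int_p^q k^2\,ds_{t_0}$, so we are done. If instead $-|e_1+e_2|^2+\langle e_1+e_2,\omega\rangle^2+\tfrac{d^2}{l^2}\bigl(\int_p^q k\,ds_{t_0}\bigr)^2\ge0$, then combining this with the Cauchy--Schwarz bound $\bigl(\int_p^q k\,ds_{t_0}\bigr)^2\le l\int_p^q k^2\,ds_{t_0}$ gives $|e_1+e_2|^2-\langle e_1+e_2,\omega\rangle^2\le\tfrac{d^2}{l}\int_p^q k^2\,ds_{t_0}$, and feeding this into the second estimate yields exactly $\partial_t d\ge-\tfrac dl\int_p^q k^2\,ds_{t_0}$. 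Either way the reduction from the first step closes and $\frac{d}{dt}(d/l)\ge0$ at $(p,q)$.

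I expect the only technical step to be the Frenet computation of the Hessian of $d$ and the sign bookkeeping in the $(1,\pm1)$ tests, and this is routine. The real content is conceptual: in the plane the identities $\langle\omega,e_1\rangle=\langle\omega,e_2\rangle$ force either $e_1=e_2$ or $e_1+e_2\parallel\omega$, so one of the two displayed estimates always closes the argument unconditionally (this is, in essence, Huisken's planar proof); for a genuinely spatial curve $\omega$ may point partly out of the osculating plane, neither alternative need occur, and the two hypotheses of the theorem are precisely the quantitative (respectively degenerate) replacements that make the $(1,-1)$-test (respectively the $(1,1)$-test) conclusive. I would also flag at the outset that the argument presupposes the minimising pair is interior with $p\ne q$, which is what legitimises the Hessian test.
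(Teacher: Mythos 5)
Your proof is correct and follows essentially the same route as the paper: the first-variation identities $\langle\omega,e_1\rangle=\langle\omega,e_2\rangle=d/l$, the second-variation tests in the $(1,1)$ and $(1,-1)$ directions (the paper's $e_1\oplus e_2$ and $e_1\ominus e_2$), the evolution $\partial_t l=-\int_p^q k^2\,ds$, the H\"older inequality, and then the two hypotheses to close the two cases; your reformulation of the second-order condition as positive semi-definiteness of the Hessian of $d$ at the critical point is only a cosmetic repackaging of the paper's direct computation of $\delta^2(d/l)$. Your reading of $\int_{\gamma_{t_0}}k\,ds_{t_0}$ as the integral over the arc from $p$ to $q$ is indeed the interpretation under which the paper's argument (and its later helix computation) is consistent.
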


\begin{proof}

Because the ratio $d/l$ reaches its global maximum at the diagonal $\gamma\times \gamma$, without loss of generality, we suppose $p\neq q$ and $s(q)>s(p)$ at $t_0$. Due to the assumption that $d/l$ reaches its local minimum at $(p,q)$, we have the following:

\begin{eqnarray}
\label{local minimum condition}
\delta(\xi)(d/l)(p,q,t_0)=0, \text{   }\delta^2(\xi) (d/l)(p,q,t_0)\ge0
\end{eqnarray}

$\delta(\xi)$ and $\delta^2(\xi)$ are the first and second variation of $d/l$ with respect to the vector $\xi\in T_p\gamma_{t_0}\oplus T_q\gamma_{t_0}$. At the same time, we define:

\begin{equation*}
\begin{split}
&e_1:=\frac{d}{ds}\gamma(p,t_0) \\ &e_2:=\frac{d}{ds}\gamma(q,t_0)\\ &\omega:=d^{-1}(p,q,t_0)(\gamma(q,t_0)-\gamma(p,t_0))
\end{split}
\end{equation*}

With the local minimum condition (\ref{local minimum condition}), we have that $\delta(\xi)=0$ is true for $\forall \delta$. We first choose $\xi=e_1\oplus 0$, then:

\begin{eqnarray}
0=\delta(e_1\oplus 0)(d/l)=\frac{d}{l^2}-\frac{1}{l}\langle \omega,e_1\rangle
\end{eqnarray}

That is to say,

\begin{eqnarray}
\label{first var cond. 1}\langle \omega,e_1\rangle=d/l
\end{eqnarray}

Similarly, we can get:

\begin{eqnarray}
\label{first var cond. 2}\langle \omega,e_2\rangle=d/l
\end{eqnarray}

Now there are two possible situations emerging:

(1) If $e_1=e_2$, then we choose $\xi=e_1\oplus e_2$, and calculate the second variation. Noticing that $\delta(\xi)=0$, we have:

\begin{equation}
\begin{split}
&0\le \delta^2 (e_1\oplus e_2)(d/l)=\\
&\frac{1}{l}\langle\vec k(p,t_0)-\vec k(q,t_0),-\omega\rangle
+\frac{1}{ld}\left | e_1+e_2\right |^2 +\frac{1}{l^2d}\langle e_1+e_2,\omega\rangle\left ( l\langle e_1+e_2,-\omega\rangle-2d\right )+\\
&\frac{2}{l^2}\langle e_1+e_2,-\omega\rangle-\frac{1d}{l^3}(-2)
\end{split}
\end{equation}

It can be easily got that:

\begin{equation}
\label{2.6} 0\le \frac{1}{l}\langle \omega, \vec k(q,t_0)-\vec k(p,t_0) \rangle
\end{equation}

(2) If $e_1\neq e_2$, choose $\xi=e_1\ominus e_2$. Noticing that $\delta(e_1\ominus e_2)(l)=-2$, we have:

\begin{equation}
0=\delta (e_1\ominus e_2)(d/l)=\frac{2d}{l^2}-\frac{1}{l}\langle \omega, e_1+e_2\rangle
\end{equation}

With the condition (\ref{first var cond. 1}) and (\ref{first var cond. 2}), we will have:

\begin{eqnarray}
\label{2.7}\begin{split}
0&\le \delta^2(e_1\ominus e_2)(d/l)\\
&=\frac{1}{l}\langle \omega, \vec k(q,t_0)-\vec k(p,t_0)\rangle+\frac{1}{dl}\left | e_1+e_2 \right |^2-
\frac{1}{dl}\langle \omega,e_1+e_2\rangle ^2
\end{split}
\end{eqnarray}

In contrast with the planar case, vector $e_1+e_2$ is no longer parallel to $\omega$, so the last two terms do not vanish.

With the evolution equation (\ref{CSF}) and conditions (\ref{first var cond. 1}) and (\ref{first var cond. 2}), calculate the derivative of $d/l$ at $(p,q)$ with respect to the time $t$:

\begin{equation}
\label{2.8}\frac{d}{dt}(d/l)(p,q,t_0)=\frac{1}{l}\langle \omega,\vec k(q,t_0)-\vec k(p,t_0)\rangle -\frac{d}{l^2}\frac{d}{dt}(l)
\end{equation}

Due to a result in \cite{A&G92}, we have the following evolution equation:

\begin{eqnarray}
\label{l development}\frac{d}{dt}l=-\int_{\gamma_{t_0}}k^2ds_{t_0}
\end{eqnarray}

Apply (\ref{l development}) and the $H\ddot{o}lder$ inequality, we can convert (\ref{2.8}) into:

\begin{eqnarray}
\label{2.10} \frac{d}{dt}(d/l)(p,q,t_0)\ge\frac{1}{l}\langle \omega,\vec k(q,t_0)-\vec k(p,t_0)\rangle+\frac{d}{l^3}(\int_{\gamma_{t_0}}kds_{t_0})^2
\end{eqnarray}

Now we discuss the following two possible situations:

(1) If $e_1=e_2$, with (\ref{2.6}), (\ref{2.10}), we will have:

\begin{equation*}
\frac{d}{dt}(d/l)(p,q,t_0)\ge \frac{d}{l^3}(\int_{\gamma{t_0}}kds_{t_0})^2\ge 0
\end{equation*}

(2) If $e_1\neq e_2$, according to the condition given and (\ref{2.7}), we will get:

\begin{equation*}
\frac{d}{dt}(d/l)(p,q,t_0)\ge \frac{1}{ld}\left (- \left | e_1+e_2 \right |^2+\langle e_1+e_2,\omega\rangle^2+\frac{d^2}{l^2}\left (\int k ds_{t_0}\right )^2 \right )\ge 0
\end{equation*}

Now the proof is complete.
\end{proof}

\begin{remark}
The conditions of the theorem seem unnatural. However, with the following observation (next remark) and our main theorem, one will see that, unless having  more concise understanding of the shape of the spatial curve between the two points $(p,q)$ where $d/l$ reaches its local minimum, the Huisken's distance comparison argument will not work in the spatial Curve Shortening Flow. That's why we give the conditions as in the theorem stated.
\end{remark}

\begin{remark}
For the helix $\gamma(u,t_0)=(a\cos(u),a\sin(u),bu)$, we can write down every geometric value:

The unit tangent vector:

\begin{equation}
\vec e=\frac{(-a\sin(u),a\cos(u),b)}{\sqrt{a^2+b^2} }
\end{equation}

The curvature vector:

\begin{equation}
\vec k=k\cdot \vec N=\frac{(-a\cos(u),-a\sin(u),0)}{a^2+b^2}
\end{equation}

The curvature and torsion:

\begin{eqnarray}
\begin{split}
k=\frac{a}{a^2+b^2} \\
\tau =\frac{b}{a^2+b^2}
\end{split}
\end{eqnarray}

Now we can calculate $\frac{d}{dt}(d/l)$ of the helix directly. Note that, the curvature of helix is constant, so the $\ge$ in (\ref{2.10}) is actually $=$. We have:

\begin{eqnarray}
\label{2.11}\frac{d}{dt}(d/l)(p,q,t_0)=\frac{1}{l}\langle \omega,\vec k(q,t_0)-\vec k(p,t_0)\rangle+\frac{d}{l^3}(\int_{\gamma_{t_0}}kds_{t_0})^2
\end{eqnarray}

If we just apply the local minimum conditon (\ref{local minimum condition}), we can only use (\ref{2.7}) to simplify (\ref{2.11}), giving:

\begin{equation}
\frac{d}{dt}(d/l)(p,q,t_0)\ge \frac{1}{ld}\left (- \left | e_1+e_2 \right |^2+\langle e_1+e_2,\omega\rangle^2+\frac{d^2}{l^2}\left (\int k ds_{t_0}\right )^2 \right )
\end{equation}

We now focus on the function $F=- \left | e_1+e_2 \right |^2+\langle e_1+e_2,\omega\rangle^2+\frac{d^2}{l^2}\left (\int k ds_{t_0}\right )^2$. Under the assumption of being a helix, we can calculate it directly:

\begin{equation}
F=-4+\frac{(2-2\cos(y))}{1+m}+\frac{4m}{1+m}+\frac{4(2-2\cos(y))}{(1+m)(y)^2} +\frac{2-2\cos(y)+m(y)^2}{(1+m)^2}
\end{equation}

Here, $y=t_1-t_2$, $m=b^2/a^2$.

After using Matlab to calculate the value, we find that when $m$ is small, the function $F$ can be negative. Therefore, unless more information other than the local minimum condition (\ref{local minimum condition}) are given, the Huisken's argument cannot guarantee that $(d/l)_{min}$ is non-decreasing.
\end{remark}

Without using any shape information, we can just get the following result:

\begin{theorem}
Given a curve shortening flow with the helix $\gamma(u,0)=(a\cos(u),$$a\sin(u),bu)$ as its initial value. Let $p,q\in\gamma_{t_0}$ be the points where d/l reaches its local minimum, $\forall t_0\in[0,T)$. If $b^2/a^2=(\tau/k)^2$ is large enough, then we will have that:

\begin{equation}
   \frac{d}{dt}{\left ( \frac{d}{l} \right )}\ge0
\end{equation}
at $(p,q)$.
\end{theorem}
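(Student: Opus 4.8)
The plan is to reduce the desired inequality to showing that the explicit function $F$ from the preceding remark is nonnegative whenever the ratio $m = b^2/a^2$ is sufficiently large. Recall that for the helix, the curvature is constant, so the H\"older step in \eqref{2.10} is an equality and we have
\begin{equation*}
\frac{d}{dt}(d/l)(p,q,t_0) = \frac{1}{ld}\left( -|e_1+e_2|^2 + \langle e_1+e_2,\omega\rangle^2 + \frac{d^2}{l^2}\Bigl(\int_{\gamma_{t_0}} k\, ds_{t_0}\Bigr)^2\right) = \frac{F}{ld},
\end{equation*}
using the second-variation inequality \eqref{2.7} together with the first variation conditions \eqref{first var cond. 1}, \eqref{first var cond. 2}. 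Since $l>0$ and $d>0$ at a genuine local minimum with $p\neq q$, the sign of $\tfrac{d}{dt}(d/l)$ is exactly the sign of $F$. So the whole theorem becomes the elementary claim: for $m$ large enough, $F\geq 0$ for all admissible parameter values $y$.

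Next I would analyze the closed form
\begin{equation*}
F = -4 + \frac{2-2\cos y}{1+m} + \frac{4m}{1+m} + \frac{4(2-2\cos y)}{(1+m)y^2} + \frac{2-2\cos y + m y^2}{(1+m)^2}.
\end{equation*}
The strategy is to expand $F$ in powers of $1/m$ as $m\to\infty$. The leading term: $\frac{4m}{1+m}\to 4$ and $\frac{my^2}{(1+m)^2}\to 0$, while the remaining terms are $O(1/m)$; so $F\to 0$ as $m\to\infty$. This means the leading order vanishes and the sign is governed by the next-order term. I would write $F = \frac{1}{1+m}\,G(y,m)$ where
\begin{equation*}
G(y,m) = -4(1+m) + (2-2\cos y) + 4m + \frac{4(2-2\cos y)}{y^2} + \frac{2-2\cos y + m y^2}{1+m},
\end{equation*}
and note $-4(1+m)+4m = -4$, so $G(y,m) = -4 + (2-2\cos y) + \frac{4(2-2\cos y)}{y^2} + \frac{2-2\cos y + my^2}{1+m}$. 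As $m\to\infty$ the last fraction tends to $y^2$, giving the limiting profile $G_\infty(y) = -2 - 2\cos y + \frac{4(2-2\cos y)}{y^2} + y^2$. The key subclaim is that $G_\infty(y) \geq c > 0$ on the relevant range of $y$ (the domain of $y$ is constrained: at a local minimum of $d/l$ the two points cannot be antipodal in arclength, and $y$ ranges over an interval bounded away from the degenerate configurations where $F$ itself is undefined or the minimum is trivial). For small $y$, Taylor expansion gives $-2-2\cos y = -4 + y^2 - \tfrac{y^4}{12}+\cdots$ and $\frac{4(2-2\cos y)}{y^2} = 4 - \tfrac{y^2}{3}+\cdots$, so $G_\infty(y) = 2y^2 - \tfrac{y^2}{3} + O(y^4) = \tfrac{5}{3}y^2 + O(y^4) > 0$ near $0$; for larger $y$ the $+y^2$ term dominates the bounded oscillatory pieces. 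Once $G_\infty(y)\geq c>0$ uniformly, a continuity/uniform-convergence argument on a compact parameter range produces a threshold $m_0$ such that $G(y,m) \geq c/2 > 0$ for all $m \geq m_0$, hence $F\geq 0$, hence the claimed monotonicity.

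The main obstacle I anticipate is pinning down the precise admissible range of $y$ and proving the uniform positivity $G_\infty(y)\geq c>0$ on it, including the behavior near the endpoints of that range: the terms $\frac{4(2-2\cos y)}{y^2}$ and $y^2$ are both nonnegative but $-2-2\cos y$ can be as negative as $-4$ (near $y=0$, where however it is compensated) and the interaction at moderate $y$ (say $y\in[\pi, 2\pi]$, where $-2-2\cos y$ ranges up through $0$ to $2$) must be checked so that no cancellation drives $G_\infty$ below zero. Making ``$m$ large enough'' effective requires controlling the error $\bigl|\frac{2-2\cos y + my^2}{1+m} - y^2\bigr| = \frac{|2-2\cos y - y^2|}{1+m} \le \frac{y^2+4}{1+m}$ uniformly over the bounded $y$-range, which is straightforward; the genuine work is the one-variable estimate on $G_\infty$, likely handled by splitting into a small-$y$ Taylor regime and a complementary regime where $y^2$ beats everything else. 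This is the step where the Matlab observation in the previous remark — that $F$ can be negative for small $m$ — gets turned into a rigorous positivity statement for large $m$.
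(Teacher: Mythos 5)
Your reduction is the same as the paper's: both arguments start from the explicit helix formula for $F$ in the preceding remark and reduce the theorem to the elementary claim that $F\ge 0$ for $m=b^2/a^2$ large. (One overstatement: passing from \eqref{2.10} to $F/(ld)$ uses the second--variation inequality \eqref{2.7}, so you only get $\frac{d}{dt}(d/l)\ge F/(ld)$, not equality; this is harmless, since only that direction is needed.) The difference is in how the one--variable estimate is organized: the paper discards the nonnegative terms containing $2-2\cos y$, Taylor--expands the cosine and regroups into $\bigl(m-\frac{1+m}{3}\bigr)y^2+8(1+m)\sum_{n\ge2}(-1)^n\frac{y^{2n}}{(2n+2)!}$, while you factor $F=\frac{G(y,m)}{1+m}$, pass to the limit profile $G_\infty(y)=y^2-2-2\cos y+\frac{4(2-2\cos y)}{y^2}$ and argue by uniform convergence.

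The gap is in the mechanism you make central. Since $G_\infty(y)=\frac{5}{3}y^2+O(y^4)$ vanishes at $y=0$, there is no constant $c>0$ with $G_\infty\ge c$ on any range touching $y=0$, and your error bound $\frac{|2-2\cos y-y^2|}{1+m}\le\frac{y^2+4}{1+m}$ does not vanish as $y\to0$, so ``uniform positivity plus uniform convergence'' cannot produce a threshold $m_0$ near $y=0$. You excuse this by asserting the admissible $y$ is a compact set bounded away from the degenerate configuration, but the reason you give (no antipodal points in arclength) does not apply to the open helix, and what is actually true --- that by the screw symmetry $d/l$ depends only on $y$ and its local minima sit exactly at $y=2\pi k$, $k\ge1$ --- is a lemma you would have to prove, and it gives a set that is bounded away from $0$ but unbounded above, hence still not compact. (If you did prove that lemma, the theorem would follow at once, since at $y=2\pi k$ one has $\cos y=1$ and $F=\frac{-4}{1+m}+\frac{4\pi^2k^2m}{(1+m)^2}\ge0$ for moderate $m$; the paper instead proves $F\ge0$ for \emph{all} $y$, which is why the small-$y$ regime cannot be dodged.) Your plan closes without any compactness if you replace the crude error bound by the exact identity $G(y,m)-G_\infty(y)=\frac{2-2\cos y-y^2}{1+m}$ together with $|2-2\cos y-y^2|\le y^4/12$ (valid for all $y$), which makes the small-$y$ regime uniform in $m$, and note that for $y^2\ge8$, $m\ge1$ one has $G(y,m)\ge-4+\frac{my^2}{1+m}\ge0$; the intermediate compact range is then handled as you propose. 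This is precisely what the paper's regrouped series accomplishes in one stroke by retaining the quadratic coefficient $m-\frac{1+m}{3}$, so after the repair your argument is the paper's proof in different packaging.
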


\begin{proof}
As in the discussions above, we have:

\begin{equation*}
\label{x}
\begin{split}
&\frac{d}{dt}(d/l)(p,q,t_0)\ge \frac{1}{ld}F =\\
&\frac{1}{ld(1+m)^2}  \{-4(1+m)^2+(2-2\cos(y))(1+m)+4m(1+m)+\\
&4(1+m)(2-2\cos(y))(y)^{-2}
+2-2\cos(y)+m(y)^2  \}
\end{split}
\end{equation*}

We now focus on the following function:

\begin{equation*}
\begin{split}
G:=&-4(1+m)^2+(1+m)(2-2\cos(y))+4m(1+m)+\\
&(1+m)\frac{4(2-2\cos(y))}{(y)^2} +2-2\cos(y)+m(y)^2
\end{split}
\end{equation*}

The Taylor expansion of $\cos(y)$ is:

\begin{equation*}
\cos(y)=\sum^\infty_{n=0}(-1)^n\frac{y^{2n}}{(2n)!},x\in(-\infty,+\infty) \end{equation*}

We expand $G$ using the above expansion:

\begin{equation*}
\begin{split}
G&\ge -4(1+m)^2+4m(1+m)+\frac{4(1+m)(y^2-2\sum^\infty_{n=2} (-1)^n \frac {y^{2n}}{(2n)!}} {y^2}+my^2 \\ &=-4-8m-4m^2+4m+4m^2+4+4m-8(1+m)\sum^\infty_{n=2}(-1)^n\frac{y^{2(n-1)}}{(2n)!}+my^2 \\
&=my^2-8(1+m)\sum^\infty_{n=2}(-1)^n\frac{y^{2(n-1)}}{(2n)!}\\
&=\left (m-\frac{1+m}{3} \right) y^2+8(1+m)\sum^\infty_{n=2}(-1)^n\frac{y^{2n}}{(2n+2)!}
\end{split}
\end{equation*}

For $m$ large enough, the above will be non-negative for all $y$. Namely, $\frac{d}{dt}(\frac{d}{l})\ge0$. The result follows.
\end{proof}

However, we could no longer get further due to the obstacles we gave above. However, if we have noticed the special shape of the helix, we can get a direct proof of the main theorem I.

\begin{proof}
If we calculate directly from equation (\ref{2.11}), we get:

\begin{equation*}
\frac{d}{dt}(d/l)(p,q,t_0)=\frac{1}{l}\langle \omega,\vec k(q,t_0)-\vec k(p,t_0)\rangle+\frac{d}{l^3}(\int_{\gamma_{t_0}}kds_{t_0})^2
\end{equation*}

And as mentioned before, we have:

\begin{equation*}
\vec k=k\cdot \vec N=\frac{(-a\cos(u),-a\sin(u),0)}{(a^2+b^2)}
\end{equation*}

Now we can directly calculate $\frac{d}{dt}(\frac{d}{l})$:

\begin{equation*}
\begin{split}
&\frac{d}{dt}(d/l)(p,q,t_0)=\frac{1}{ld}\left(\langle d\omega, \vec k (q,t_0)-\vec k (p,t_0)\rangle +\frac{d^2}{l^2}\left ( \int_{\gamma_0}|k|ds \right )^2\right )\\
&=\frac{1}{ld}\left (\frac{a}{a^2+b^2}\langle F(q,t_0)-F(p,t_0),(\cos u_1-\cos u_2,\sin u_1-\sin u_2,0)\rangle +d^2\left ( \frac {a}{a^2+b^2}\right )^2\right )\\
&=\frac{1}{ld(1+m)}\frac{a^2}{a^2+b^2}\left(-2(1+m)+2\cos y (1+m) +2-2\cos y+{m}y^2\right )\\
&=\frac{2m}{ld(1+m)}\frac{a^2}{a^2+b^2}\left (\cos y-(1-\frac{1}{2}y^2) \right )\\
&\ge 0
\end{split}
\end{equation*}

The result now follows.

\end{proof}

Now we consider a more general case:

\begin{eqnarray}\label{2.15}
\gamma(u,t_0)=(f(u),g(u),bu)
\end{eqnarray}

To make the following calculation easier, we give the following condition:

\begin{eqnarray}
\label{A1,A2}
\begin{split}
f'(u)^2+g'(u)^2=a^2 \\
f''(u^2)+g''(u)^2\le A
\end{split}
\end{eqnarray}

Here, $a,A$ are constants.
Now we give the following theorem concerning the curves we just stated:

\begin{theorem}
Suppose at $t_0$ the solution $\gamma(u,t_0)$ of the spatial curve shortening flow is the curve in (\ref{2.15}), and the conditions (\ref{A1,A2}) are satisfied. Furthermore, if at the points $p,q$ where $d/l$ reaches its local minimum, we have:

\begin{equation*}
\begin{split}
&(f''(u_2)-f''(u_1))(f(u_2)-f(u_1))+(g''(u_2)-g''(u_1))(g(u_2)-g(u_1))\\
&+\frac{A}{a^2+b^2}((f(u1)-f(u_2))^2+((g(u_1)-g(u_2))^2+b^2(u_1-u_2)^2)\\
&\ge 0
\end{split}
\end{equation*}

Then we have that:

\begin{equation*}
   \frac{d}{dt}{\left ( \frac{d}{l} \right )}\ge0
\end{equation*}
at $(p,q)$.
\end{theorem}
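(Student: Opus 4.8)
The plan is to imitate the direct proof of Theorem~\ref{main theorem 1} given above: since the solution at time $t_0$ is given in coordinates, one can evaluate $\frac{d}{dt}(d/l)$ at the local minimum $(p,q)$ explicitly in terms of $f,g$ and their first two derivatives, after which non-negativity falls out of the hypothesis. The starting point is the form of the evolution identity \emph{before} the H\"older estimate is applied: at $(p,q)$, where the first-variation relations (\ref{first var cond. 1}) and (\ref{first var cond. 2}) hold, equations (\ref{2.8}) and (\ref{l development}) combine into
\begin{equation*}
\frac{d}{dt}(d/l)(p,q,t_0)=\frac{1}{l}\langle\omega,\vec k(q,t_0)-\vec k(p,t_0)\rangle+\frac{d}{l^2}\int_p^q k^2\,ds_{t_0}.
\end{equation*}
I would keep the term $\int_p^q k^2\,ds_{t_0}$ as is, rather than bounding it below by $\frac1l\left(\int_p^q k\,ds_{t_0}\right)^2$ as in (\ref{2.10}); for a curve of non-constant curvature that passage discards precisely the information needed below.

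Next I would substitute the Frenet data of $\gamma(u,t_0)=(f(u),g(u),bu)$. The first line of (\ref{A1,A2}) gives $|\gamma'(u)|^2=f'(u)^2+g'(u)^2+b^2=a^2+b^2$, a constant, so $ds_{t_0}=\sqrt{a^2+b^2}\,du$ and $l=\sqrt{a^2+b^2}\,(u_2-u_1)$ (normalizing $u_2>u_1$, as in $s(q)>s(p)$); differentiating twice in arclength kills the vertical component, $\vec k=\frac{1}{a^2+b^2}(f''(u),g''(u),0)$. Hence the vertical parts cancel in
\begin{equation*}
d\,\langle\omega,\vec k(q,t_0)-\vec k(p,t_0)\rangle=\frac{1}{a^2+b^2}\Big((f''(u_2)-f''(u_1))(f(u_2)-f(u_1))+(g''(u_2)-g''(u_1))(g(u_2)-g(u_1))\Big),
\end{equation*}
while $k^2=\frac{f''(u)^2+g''(u)^2}{(a^2+b^2)^2}$ gives $\int_p^q k^2\,ds_{t_0}=\frac{1}{(a^2+b^2)^{3/2}}\int_{u_1}^{u_2}\big(f''(u)^2+g''(u)^2\big)\,du$. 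Substituting these and recording $d^2=(f(u_1)-f(u_2))^2+(g(u_1)-g(u_2))^2+b^2(u_1-u_2)^2$, the identity becomes
\begin{equation*}
\frac{d}{dt}(d/l)=\frac{1}{l\,d\,(a^2+b^2)}\left((f''(u_2)-f''(u_1))(f(u_2)-f(u_1))+(g''(u_2)-g''(u_1))(g(u_2)-g(u_1))+\frac{d^2}{(a^2+b^2)(u_2-u_1)}\int_{u_1}^{u_2}\big(f''(u)^2+g''(u)^2\big)\,du\right).
\end{equation*}

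It then remains to absorb the curvature integral. Using the bound on $f''(u)^2+g''(u)^2$ from the second line of (\ref{A1,A2}) in the form $\int_{u_1}^{u_2}\big(f''(u)^2+g''(u)^2\big)\,du\ge A\,(u_2-u_1)$, one gets
\begin{equation*}
\frac{d^2}{(a^2+b^2)(u_2-u_1)}\int_{u_1}^{u_2}\big(f''(u)^2+g''(u)^2\big)\,du\ \ge\ \frac{A}{a^2+b^2}\,d^2\ =\ \frac{A}{a^2+b^2}\Big((f(u_1)-f(u_2))^2+(g(u_1)-g(u_2))^2+b^2(u_1-u_2)^2\Big),
\end{equation*}
so $\frac{d}{dt}(d/l)$ is at least $\frac{1}{l\,d\,(a^2+b^2)}$ times exactly the quantity assumed non-negative in the statement. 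Therefore $\frac{d}{dt}(d/l)\ge0$ at $(p,q)$.

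The one point that needs care is this last estimate: the argument stands or falls on bounding the curvature integral $\int_p^q k^2\,ds_{t_0}$ \emph{from below} by the curvature-free quantity $A\,(u_2-u_1)/(a^2+b^2)^{3/2}$, so for the inequality to go the right way $A$ has to act as a \emph{lower} bound for $f''(u)^2+g''(u)^2$ on the arc between $p$ and $q$ --- that is, the second line of (\ref{A1,A2}) should be read as $f''(u)^2+g''(u)^2\ge A$. Everything else is either quoted (the evolution equations (\ref{2.8}), (\ref{l development}) and the first-variation identities) or immediate from the constancy of $|\gamma'|$. As a check, the helix of Theorem~\ref{main theorem 1} is the case $f(u)=a\cos u$, $g(u)=a\sin u$ with $A=a^2$, for which the displayed bracket collapses to $\frac{a^2b^2}{a^2+b^2}\big(y^2-2+2\cos y\big)\ge0$ ($y=u_1-u_2$), recovering the formula found there.
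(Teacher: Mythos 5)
Your computation is correct and it follows essentially the same route as the paper's own proof: evaluate $\frac{d}{dt}(d/l)$ at $(p,q)$ from (\ref{2.8}) and (\ref{l development}) without the H\"older step of (\ref{2.10}), insert $\vec k=(f'',g'',0)/(a^2+b^2)$, $ds_{t_0}=\sqrt{a^2+b^2}\,du$, $l=\sqrt{a^2+b^2}\,(u_2-u_1)$, and compare the resulting bracket with the quantity in the hypothesis; your intermediate identity is right, and your helix check ($A=a^2$, bracket equal to $\frac{a^2b^2}{a^2+b^2}(y^2-2+2\cos y)\ge0$) agrees with the direct proof of Theorem \ref{main theorem 1}.

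The caveat you raise at the end is not a defect of your argument but a real gap in the paper's: the paper replaces $\frac{1}{u_2-u_1}\int_{u_1}^{u_2}\bigl(f''(u)^2+g''(u)^2\bigr)\,du$ by $A$ under an equality sign, which is legitimate only when $f''^2+g''^2\equiv A$ on the arc (as for the helix). Under the hypothesis actually written in (\ref{A1,A2}), namely $f''(u)^2+g''(u)^2\le A$, this substitution bounds the positive curvature term from \emph{above}, so one only obtains $\frac{d}{dt}(d/l)\le F/\bigl(l\,d\,(a^2+b^2)\bigr)$, and $F\ge0$ then yields no conclusion. Your reading --- that $A$ must be a lower bound for $f''^2+g''^2$ on the arc between $p$ and $q$, or equivalently that the hypothesis should be stated with the genuine integral $\frac{d^2}{(a^2+b^2)(u_2-u_1)}\int_{u_1}^{u_2}\bigl(f''(u)^2+g''(u)^2\bigr)\,du$ in place of $\frac{A}{a^2+b^2}\,d^2$ --- is exactly what makes the estimate run in the right direction, and it is consistent with the only example the paper offers (the helix, where $f''^2+g''^2=a^2$ is constant). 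So your proposal is a correct repair of the intended argument rather than a proof of the statement with (\ref{A1,A2}) read literally.
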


\begin{proof}
We can directly calculate every geometric values:

\begin{equation*}
\begin{split}
&\vec T=(f'(u),g'(u),b)/\sqrt{a^2+b^2}\\
&k\cdot \vec N=(f''(u),g''(u),0)/(a^2+b^2)\\
&\omega=\frac{(f(u_2)-f(u_1),g(u_2)-g(u_1),b(t_2-t_1))}{\sqrt{(f(u_1)-f(u_2))^2+(g(u_2)-g(u_1))^2+b^2(u_1-u_2)^2}}
\end{split}
\end{equation*}

With the calculation in (\ref{2.8})(\ref{l development}), we have the following:

\begin{eqnarray}
\frac{d}{dt}(d/l)(p,q,t_0)=\frac{1}{l}\langle \omega,\vec k(q,t_0)-\vec k(p,t_0)\rangle +\frac{d}{l^2}\int k^2 ds_{t_0}
\end{eqnarray}

Insert all the values calculated, we have:
\begin{equation*}
\begin{split}
&\frac{d}{dt}(d/l)(p,q,t_0)\\
=&\frac{1}{l}\langle \omega,\vec k(q,t_0)-\vec k(p,t_0)\rangle +\frac{d}{l^2}\int k^2 ds_{t_0} \\
=&l^{-1}\frac{(f''(u_2)-f''(u_1))(f(u_2)-f(u_1))+(g''(u_2)-g''(u_1))(g(u_2)-g(u_1))} {\sqrt{(f(u1)-f(u_2))^2+((g(u_1)-g(u_2))^2+b^2(u_1-u_2)^2}(a^2+b^2)}\\
&+\frac{\sqrt{((f(u1)-f(u_2))^2+((g(u_1)-g(u_2))^2+b^2(u_1-u_2)^2}} {\sqrt{a^2+b^2}(t_2-t_1)} \int_{t_1}^{t_2}\frac{f''(u)^2+g''(u)^2}{(a^2+b^2)^{3/2}}du\\
=&\frac{F}{l(a^2+b^2)n}
\end{split}
\end{equation*}

Here,
\begin{equation*}
\begin{split}
n=&\sqrt{((f(u1)-f(u_2))^2+((g(u_1)-g(u_2))^2+b^2(u_1-u_2)^2}\\
F=&(f''(u_2)-f''(u_1))(f(u_2)-f(u_1))+(g''(u_2)-g''(u_1))(g(u_2)-g(u_1))\\ &+\frac{A}{a^2+b^2}((f(u1)-f(u_2))^2+((g(u_1)-g(u_2))^2+b^2(u_1-u_2)^2)
\end{split}
\end{equation*}

According to the condition given, we have $F\ge 0$, so $\frac{d}{dt}(\frac{d}{l})\ge 0$.
\end{proof}

\begin{remark}
It can be easily checked that the helix satisfy the conditions in the theorem.
\end{remark}

\section{Some discussion of the spatial closed curves}

Similar to the calculation in \cite{Hui98}, we can give theorems concerning the solutions of (\ref{CSF}) with a spatial closed curve as its initial value.

First, we need a definition:

\begin{definition}
Define $L:=\int_{\gamma_{t_0}}ds_{t_0}$ to be the total length of the curve $\gamma_{t_0}$. The intrinsic distance $l$ can only be smoothly defined for $0\le l<L/2$. We now define the following quantity as in \cite{Hui98}:

\begin{equation*}
\psi:=\frac{L}{\pi}\sin{\frac{l\pi}{L}}
\end{equation*}

And we define:

\begin{equation*}
 \alpha:=(l\pi/L)
\end{equation*}
\end{definition}

\begin{theorem}
Suppose $\gamma$ is the solution of the (\ref{CSF}). $d/{\psi} $ reaches its minimum at $(p,q)\in\gamma_{t_0}$. If at $(p,q)$, either of the following conditions is satisfied:

\begin{eqnarray}\label{3.1}
\cos {\alpha}\left (\int_p^q|k|ds_{t_0}\right )^2-\cos{\alpha}\frac{4\pi^2l^2}{L^2}-\frac{\psi l |e_1+e_2|^2}{d^2}+\frac{4l}{\psi}\cos^2 \alpha\ge 0
\end{eqnarray}
or:

\begin{eqnarray}
e_1=e_2
\end{eqnarray}
then we will have that:

\begin{equation*}
\frac{d}{dt}\left (\frac{d}{\psi}\right )\ge 0
\end{equation*}
at $(p,q)$.
\end{theorem}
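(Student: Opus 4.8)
The plan is to mimic the variational argument used in the proof of the first theorem of this section, but now with the extrinsic distance $d$ compared against the corrected intrinsic quantity $\psi = \frac{L}{\pi}\sin\frac{l\pi}{L}$ rather than $l$ itself, following Huisken's treatment in \cite{Hui98}. First I would record the first-variation identities at a local (hence, since $d/\psi$ attains its global maximum on the diagonal, an interior) minimum of $d/\psi$: varying by $\xi = e_1 \oplus 0$ and $\xi = 0 \oplus e_2$ gives relations of the form $\langle \omega, e_i\rangle = (d/\psi)\,\psi'$, where $\psi' = \frac{dl}{\psi} = \cos\alpha$ (since $\partial_l \psi = \cos(l\pi/L)$). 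Then, exactly as in case (2) of the earlier proof, I would compute the second variation in the direction $\xi = e_1 \ominus e_2$; here $\delta(e_1\ominus e_2)(l) = -2$ and $\delta^2(e_1\ominus e_2)(\psi) $ picks up the extra term $\psi'' \cdot(\delta l)^2 = 4\psi'' = -\frac{4\pi}{L}\sin\alpha$, which is precisely the geometric content that makes $\psi$ the right comparison function. Collecting terms and using $\delta^2(d/\psi)\ge 0$ yields an inequality bounding $\frac{1}{\psi}\langle\omega, \vec k(q)-\vec k(p)\rangle$ from below by the curvature-of-$\psi$ term plus the $|e_1+e_2|^2$ defect term, in parallel with (\ref{2.7}).

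Next I would differentiate $d/\psi$ in time. The numerator contributes $\frac{1}{\psi}\langle \omega, \vec k(q,t_0) - \vec k(p,t_0)\rangle$ as before, and the denominator contributes through $\frac{d}{dt}\psi = \psi'\cdot \frac{d}{dt}l + (\text{terms from } \frac{d}{dt}L)$; using $\frac{d}{dt}l = -\int_p^q k^2\,ds_{t_0}$ and $\frac{d}{dt}L = -\int_{\gamma_{t_0}} k^2\,ds_{t_0}$ from \cite{A&G92}, and then Hölder/Cauchy--Schwarz to replace $\int_p^q k^2\,ds_{t_0}$ by $\frac{1}{l}\left(\int_p^q |k|\,ds_{t_0}\right)^2$ (and likewise over $\gamma_{t_0}$), I get a lower bound for $\frac{d}{dt}(d/\psi)$ of the schematic form: the second-variation lower bound for $\frac{1}{\psi}\langle\omega,\vec k(q)-\vec k(p)\rangle$, plus a positive multiple of $\left(\int_p^q|k|ds_{t_0}\right)^2\cos\alpha$, minus the $\cos\alpha\,\frac{4\pi^2 l^2}{L^2}$ term coming from $\frac{d}{dt}L$, minus the $|e_1+e_2|^2$ defect, plus the $\frac{4l}{\psi}\cos^2\alpha$ term from $\psi''$. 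This is exactly the expression $F$ appearing in hypothesis (\ref{3.1}), so in case $e_1\ne e_2$ the stated condition forces $\frac{d}{dt}(d/\psi)\ge 0$. In the degenerate case $e_1 = e_2$, I would instead use the test vector $\xi = e_1\oplus e_2$ (as in case (1) of the earlier proof), where the $|e_1+e_2|^2$, $\langle e_1+e_2,\omega\rangle$ and the $\psi''$ contributions can be shown to be individually controlled, leaving $0 \le \frac{1}{\psi}\langle\omega, \vec k(q,t_0) - \vec k(p,t_0)\rangle$; feeding this into the time-derivative identity together with the (non-negative) remaining curvature terms gives $\frac{d}{dt}(d/\psi)\ge 0$ with no further hypothesis.

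The main obstacle, as in the first theorem, is not any single computation but the bookkeeping: one must carefully track every boundary term in the first and second variations of $\psi$ (which, unlike $l$, has $l$-dependence through $L$ as well, and $L$ itself evolves in time), make sure the Hölder step is applied with the right integration domain in each of the two places it is used, and verify that the algebraic combination of all these terms collapses precisely onto the left-hand side of (\ref{3.1}). The genuinely three-dimensional feature — that $e_1 + e_2$ need not be parallel to $\omega$, so $|e_1+e_2|^2 - \langle e_1+e_2,\omega\rangle^2 \ge 0$ does not vanish — is what produces the $-\frac{\psi l|e_1+e_2|^2}{d^2}$ term and is the reason the hypothesis cannot simply be dropped, so I would not attempt to remove it; the content of the theorem is precisely the identification of the sign condition under which Huisken's planar argument survives. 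The case split on $e_1 \stackrel{?}{=} e_2$ should be handled first to isolate the clean sub-result, then the generic case assembled from the variational inequality and the evolution equations.
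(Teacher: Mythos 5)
Your proposal follows essentially the same route as the paper: the first-variation identities $\langle\omega,e_i\rangle=\frac{d}{\psi}\cos\alpha$, the second variation along $e_1\ominus e_2$ (resp.\ $e_1\oplus e_2$ when $e_1=e_2$), the time derivative via the evolution of $l$ and $L$, and a H\"older step, assembled into exactly the quantity in (\ref{3.1}). Two details your sketch leaves implicit but the paper's proof actually needs are Fenchel's theorem $\int_{\gamma_{t_0}}|k|\,ds\ge 2\pi$, which upgrades the global H\"older bound to $\int_{\gamma_{t_0}}k^2\,ds\ge 4\pi^2/L$ and, combined with the $\psi''$-contribution $\frac{4d\pi^2}{\psi L^2}$, produces the $-\cos\alpha\,\frac{4\pi^2l^2}{L^2}$ piece (the $\frac{4l}{\psi}\cos^2\alpha$ term in fact comes from $\langle e_1+e_2,\omega\rangle^2$ via the first-variation identities, not from $\psi''$), and the elementary bound $\alpha/\tan\alpha\le 1$ needed to conclude in the $e_1=e_2$ case.
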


\begin{proof}
Because $d/\psi$ reaches local minimum at $(p,q)$, so we have:

\begin{equation*}
\begin{split}
\delta(d/\psi)=0 \\
\delta^2(d/\psi)\ge0
\end{split}
\end{equation*}

Let the first variation vector be $\xi=e_1 \oplus  0$, then we have:

\begin{equation*}
\begin{split}
0 &=\delta(e_1\oplus 0)(\frac{d}{\psi})(p,q,t_0) \\
&=\psi^{-2}\left (\frac{\psi}{d}\langle e_1,\gamma(p,t_0)-\gamma(q,t_0)\rangle+d\cos \frac{l\pi}{L} \right )\\
&=\psi^{-2} \left ( \psi\langle e_1,-\omega\rangle+d\cos\left ( \frac{l\pi}{L}\right )\right )
\end{split}
\end{equation*}

So we will have the equation:

\begin{eqnarray}\label{fir. var. condition 21}
\langle \omega, e_1\rangle=\frac{d}{\psi}\cos\left ( \frac{l\pi}{L}\right )
\end{eqnarray}

Similarly, one can get:

\begin{eqnarray}\label{fir. var. condition 22}
\langle \omega, e_2\rangle=\frac{d}{\psi}\cos\left ( \frac{l\pi}{L}\right )
\end{eqnarray}

Now we have two possibilities:

(1) If $e_1=e_2$, let the variation vector be $\xi=e_1\oplus e_2$, then:

\begin{equation*}
\begin{split}
0\le&\delta^2(\xi)(d/\psi)(p,q,t_0) \\
=&\frac{1}{\psi d} \langle \frac{d^2\gamma(p,t_0)}{d s_{t_0}^2}-\frac{d^2\gamma(q,t_0)}{d s_{t_0}^2},\gamma(p,t_0)-\gamma(q,t_0)\rangle+\frac{1}{\psi d} \parallel\frac{d\gamma(p,t_0)}{ds_{t_0}}-\frac{d \gamma(q,t_0)}{ds_{t_0}}\parallel^2 \\
&-\frac{1}{(\psi d)^2}\left\langle \frac{d\gamma(p,t_0)}{ds_{t_0}}-\frac{d\gamma(q,t_0)}{ds_{t_0}},\gamma(p,t_0)-\gamma(q,t_0)\right\rangle \delta(\xi)(\psi d)\\
=&\frac{1}{\psi d}\langle \vec k(p,t_0)-\vec k(q,t_0),\gamma(p,t_0)-\gamma (q,t_0)\rangle\\
=&\frac{1}{\psi}\langle \vec k(p,t_0)-\vec k(q,t_0),\omega\rangle
\end{split}
\end{equation*}

On the second last line above, we use the following fact:

\begin{equation*}
\frac{d\gamma(p,t_0)}{ds_{t_0}}-\frac{d\gamma(q,t_0)}{ds_{t_0}}=e_1-e_2=0 \end{equation*}

Therefore, we have:

\begin{eqnarray}\label{3.4}
\langle \vec k(p,t_0)-\vec k(q,t_0),\omega\rangle\ge 0
\end{eqnarray}

(2) If $e_1\neq e_2$, let the variation vector be $\xi=e_1\ominus e_2 $. Noticing that:

\begin{equation*}
\delta(\xi)(\psi)=\frac{L}{\pi}\cos(\frac{l\pi}{L})\frac{\pi}{L} \left (\delta(\xi)(l)\right )=-2\cos(\frac{l\pi}{L})
\end{equation*}

we then have:

\begin{equation*}
\begin{split}
0\le& \delta^2(\xi)(d/\psi)(p,q,t_0)\\
=&\delta(\xi)\left ( \frac{\psi \delta(\xi)(d)-d\delta(\xi)(\psi)}{\psi^2}\right )\\
=& \frac{1}{\psi}\langle \vec k(q,t_0)-\vec k(p,t_0),\omega\rangle +\frac{1}{\psi d}|e_1+e_2|^2-\frac{1}{\psi d}\langle e_1+e_2,\omega\rangle^2+\frac{4d\pi^2}{\psi L^2}
\end{split}
\end{equation*}

Here we have used the conditions (\ref{fir. var. condition 21}) and (\ref{fir. var. condition 22}).

Now we are ready to compute $\frac{d}{dt}(d/\psi)$:

\begin{equation*}
\begin{split}
\frac{d}{dt}(\frac{d}{\psi})=& \frac{1}{\psi d}\left \langle \gamma(q,t_0)-\gamma(p,t_0),\vec k(q,t_0)-\vec k(p,t_0)\right \rangle-\frac{d}{\psi^2}\frac{d}{dt}\left (\frac{L}{\pi}\sin(\frac{l\pi}{L}\right ) )\\
=&\frac{1}{\psi}\langle \omega,\vec k(q,t_0)-\vec k(p,t_0)\rangle+\frac{d}{\psi^2\pi}\sin(\alpha)\int_{S^1}k^2ds_{t_0}+\\
&\frac{d}{\psi^2}\cos(\alpha)\int_p^q k^2ds_{t_0}-\frac{dl}{\psi^2L}\cos(\alpha)\int_{S^1}k^2ds_{t_0}
\end{split}
\end{equation*}

Here we have used the condition $\frac{d}{dt}(ds_{t_0})=-k^2ds_{t_0}$. Now let us discuss all the possibilities:

(1) If $e_1=e_2$, first notice that $\frac{d}{\psi^2\pi}\sin(\alpha)=\frac{d}{\psi L}$. Together with the calculation above we have that:

\begin{equation*}
\begin{split}
\frac{d}{dt}(\frac{d}{\psi})\ge& \frac{d}{\psi^2\pi}\sin(\alpha)\int_{S^1}k^2ds_{t_0}+\frac{d}{\psi^2}\cos(\alpha)\int_p^q k^2ds_{t_0}-\frac{dl}{\psi^2L}\cos(\alpha)\int_{S^1}k^2ds_{t_0} \\
\ge & \frac{d}{\psi L}(1-\frac{l}{\psi}\cos \alpha)\int_{S^1}k^2ds_{t_0}+\frac{d}{\psi^2}\cos(\alpha)\int_p^qk^2ds_{t_0}
\end{split}
\end{equation*}

Noticing that $\frac{l}{\psi}\cos(\alpha)=(l\pi/L)/\tan(l\pi/L)=\alpha/\tan(\alpha)\le1 $, we have $\frac{d}{dt}\left (\frac{d}{\psi}\right )\ge 0$.

(2) If $e_1\neq e_2$, with Fenchel theorem, we have $\int_\gamma|\vec k|ds\ge 2\pi $, so we will get $\int_\gamma k^2 ds\ge \frac{r\pi^2}{L} $. Together with the calculation above, we have:

\begin{equation*}
\begin{split}
\frac{d}{dt}(\frac{d}{\psi})\ge&-\frac{1}{\psi d}\left (|e_1+e_2|^2-\langle e_1+e_2,\omega\rangle^2\right )-\frac{4d\pi^2}{\psi L^2} +\frac{d}{\psi L}\left ( 1-\frac{l}{\psi}\cos(\alpha)\right )\int_{S^1}k^2ds_{t_0}\\
&+\frac{d}{\psi^2}\cos(\alpha)\int_p^qk^2ds_{t_0}\\
\ge& -\frac{1}{\psi d}(|e_1+e_2|^2-\langle e_1+e_2,\omega\rangle^2)-\frac{4d\pi^2}{\psi L^2}+\frac{4d\pi^2}{\psi L^2}(1-\frac{l}{\psi}\cos(\alpha))\\
&+\frac{d}{\psi^2}\cos(\alpha)\int_p^qk^2ds_{t_0}\\
=&\frac{d}{\psi^2 l}\left (\cos(\alpha)l\int_p^qk^2ds_{t_0}-\cos(\alpha)\frac{4\pi^2 l^2}{L^2}-\frac{\psi l|e_1+e_2|^2}{d^2}+\frac{4l\cos^2(\alpha)}{\psi}\right )\\
\ge&\frac{d}{\psi^2 l}\left (\cos {\alpha}\left (\int_p^qkds_{t_0}\right )^2-\cos(\alpha)\frac{4\pi^2 l^2}{L^2}-\frac{\psi l|e_1+e_2|^2}{d^2}+\frac{4l}{\psi}\cos^2(\alpha)\right )
\end{split}
\end{equation*}

On the second last line we used the conditions (\ref{fir. var. condition 21}) and (\ref{fir. var. condition 22}). On the last line, we have used the $H\ddot{o}lder $ inequality.

Together with the condition of the theorem, we have $\frac{d}{dt}(\frac{d}{\psi})\ge 0 $.

\end{proof}

\begin{remark}
For some spatial curves, the condition of the above theorem can be satisfied. For example, the closed spatial curve $\gamma_(u,t_0)=(\cos u,\sin u,\cos(2u)) $ satisfies the condition. Because the minimum "pairs" $(p,q)$ take place at the two crests and two valleys, we get:

\begin{equation*}
\cos {\alpha}\left (\int_p^q|k|ds_{t_0}\right )^2-\cos{\alpha}\frac{4\pi^2l^2}{L^2}-\frac{\psi l |e_1+e_2|^2}{d^2}+\frac{4l}{\psi}\cos^2 \alpha=0
\end{equation*}

Because $|e_1+e_2|=0,\cos \alpha=0 $, the whole term becomes zero. Therefore, the condition is satisfied.
\end{remark}

\section{Proof of main theorem II}

First, we restate main theorem II:

\begin{theorem}
If the initial curve $\gamma_0$ is an embedded closed curve on $S^2(0)$, the solution of the spatial curve shortening flow (\ref{CSF}) converges to a round point.
\end{theorem}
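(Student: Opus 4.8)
\medskip
\noindent\emph{Proof strategy.}\ The plan is to adapt Huisken's distance comparison argument \cite{Hui98} and feed it into the singularity classification of \cite{Hui90} and \cite{Alt91}. The whole statement reduces to showing that $(d/\psi)_{\min}(t):=\min_{p\neq q}(d/\psi)(p,q,t)$ is non-decreasing on $[0,T)$. Granting this, $d\ge c_0\,\psi$ holds throughout the flow with $c_0:=(d/\psi)_{\min}(0)>0$; since $\psi=\frac{L}{\pi}\sin(l\pi/L)>0$ for $0<l<L$, the bound $d\ge c_0\psi$ forbids self-intersections, so $\gamma_t$ stays embedded, and --- exactly as in \cite{Hui98} --- it also forbids a Type~II singularity (the Grim Reaper has $\inf(d/l)=0$) and forces every rescaled limit of a Type~I singularity to be an \emph{embedded} Abresch--Langer curve, hence a round circle, since the non-circular Abresch--Langer curves are immersed but not embedded --- precisely the obstruction for spatial curves pointed out in Section~1. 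Thus $\gamma_t$ converges to a round point.

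To prove the monotonicity I would first record a confinement estimate that transports the hypothesis $\gamma_0\subset S^2(0)$ to all later times. Let $x_0$ be the centre and $R_0$ the radius of $S^2(0)$. Writing $\partial_s$ for the arclength derivative, from $\partial_s^2\gamma=k\vec N$ and the flow equation $\partial_t\gamma=k\vec N$ one computes $(\partial_t-\partial_s^2)\,|\gamma-x_0|^2=-2$, so $\max_u|\gamma-x_0|^2(u,t)$ is strictly decreasing and $\gamma_t$ remains inside the shrinking round ball about $x_0$ of radius $\sqrt{R_0^2-2t}$. In particular $T\le R_0^2/2<\infty$ and the solution contracts to a point. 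Confinement to this \emph{convex and umbilic} region is the geometric input I intend to use in place of the explicit formulas available for the helix in Section~2.

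The decisive step, which I expect to be the main obstacle, is to verify the hypothesis of the Section~3 monotonicity theorem at every minimum pair $(p,q)$ of $d/\psi$ along the flow: that either $e_1=e_2$ or inequality (\ref{3.1}) holds. There is no closed-form check here; instead one must combine the first-variation identities (\ref{fir. var. condition 21}) and (\ref{fir. var. condition 22}), which give $\langle\omega,e_1\rangle=\langle\omega,e_2\rangle=(d/\psi)\cos\alpha$, with the confinement estimate to control the genuinely three-dimensional error $|e_1+e_2|^2-\langle e_1+e_2,\omega\rangle^2$ against $\cos^2\alpha$ and the total-curvature term $(\int_p^q|k|\,ds)^2$ in (\ref{3.1}). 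The borderline subcase $\alpha=\pi/2$ (an intrinsically antipodal minimum pair) forces $e_1,e_2\perp\omega$ and needs a separate argument showing that for a curve confined to the ball such a pair cannot realise the minimum unless $|e_1+e_2|=0$ --- in which case (\ref{3.1}) holds with equality, as it does for the round circle; the case $e_1\neq e_2$ is closed as in Section~3 using Fenchel's inequality $\int_\gamma|k|\,ds\ge2\pi$. Once this verification is in hand the monotonicity of $(d/\psi)_{\min}$, and hence the convergence to a round point, follows.
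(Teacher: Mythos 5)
There is a genuine gap, and it sits exactly where you yourself flag it: the ``decisive step'' of verifying, at every minimum pair $(p,q)$ of $d/\psi$ and at every time, that either $e_1=e_2$ or inequality (\ref{3.1}) holds is never actually carried out. You describe what one ``must combine'' (the first-variation identities plus confinement in a shrinking ball) and you acknowledge a borderline case $\alpha=\pi/2$ that ``needs a separate argument,'' but no estimate is supplied that controls the three-dimensional error term $|e_1+e_2|^2-\langle e_1+e_2,\omega\rangle^2$ by the remaining terms of (\ref{3.1}). This is not a routine verification: the paper's own helix computation (the remark following the Section~2 theorem) shows that the first- and second-variation information at a local minimum of the ratio is \emph{not} by itself enough to make Huisken's argument close in $\mathbf{R}^3$, and mere confinement of the curve to a convex ball is a much weaker piece of shape information than what that counterexample indicates is needed. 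So as written your text is a plan for a proof, with its hardest step left open, rather than a proof.

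You also discard the one structural fact that makes the theorem tractable. From $(\partial_t-\partial_s^2)|\gamma|^2=-2$ you only extract the one-sided conclusion that the curve stays \emph{inside} the ball of radius $\sqrt{1-2t}$; but since $|\gamma_0|^2\equiv 1$, the function $|\gamma|^2-(1-2t)$ solves the heat equation along the flow with zero initial data, so by the maximum principle (applied in both directions) the curve stays \emph{exactly on} the shrinking sphere $S^2_{\sqrt{1-2t}}(0)$. This is the paper's route: writing $\gamma=\sqrt{1-2t}\,\tilde\gamma$ and rescaling time by $\tilde t=-\tfrac12\log(\tfrac12-t)$, the curvature decomposition $k\vec N=k_g\vec Q+k_n\vec n$ with $k_n=1/\sqrt{1-2t}$ shows that $\tilde\gamma$ evolves by the intrinsic curve shortening flow on the fixed unit sphere, $\partial_{\tilde t}\tilde\gamma=\tilde k_g\tilde{\vec Q}$, after which one quotes the known Grayson-type result for curves on surfaces (Zhu \cite{Zhu98}, or the distance comparison principle of \cite{D&M}) and undoes the homothety. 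That reduction entirely avoids the verification of (\ref{3.1}) that blocks your argument. If you want to salvage your approach, you would at minimum need to prove a quantitative lemma showing that for curves lying on (not merely inside) a round sphere the hypothesis of the Section~3 theorem holds at every minimum pair --- which is essentially the content of the surface results the paper cites, so you would be reproving them rather than bypassing them.
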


\begin{proof}
First, we prove the curve $\gamma(\cdot,t)$ remains on the sphere $S^2_{\sqrt{1-2t}}(0) $.

\begin{equation*}
\begin{split}
\frac{\partial}{\partial t}(|\gamma|^2)=&2\langle \gamma,\frac{\partial}{\partial t}\gamma\rangle=2\langle\gamma,k\cdot \vec N\rangle\\
=&2\langle\gamma,\frac{\partial^2}{\partial s^2}\gamma\rangle\\
=&\frac{\partial^2}{\partial s^2}\langle \gamma,\gamma\rangle-2\langle\frac{\partial}{\partial s}\gamma,\frac{\partial}{\partial s}\gamma\rangle\\
=&\frac{\partial^2}{\partial s^2}(|\gamma|^2)-2
\end{split}
\end{equation*}

Simply speaking,

\begin{equation*}
\frac{\partial}{\partial t}(|\gamma|^2)=\frac{\partial^2}{\partial s^2}(|\gamma|^2)-2
\end{equation*}

Because the solution to these kind of partial differential equations is unique, we get that:

\begin{equation*}
|\gamma|^2=1-2t
\end{equation*}
is the only solution of the above equation. That is equivalent to saying that $\gamma(\cdot,t)$ always stays on the sphere $S^2_{\sqrt{1-2t}}(0) $.

Notice that, if $\gamma(\cdot,t)$ stays on a sphere $S^2_{\sqrt{1-2t}}(0) $, its curvature vector can be decomposed. Precisely speaking, the vector $k\cdot \vec N$ can be written as:

\begin{equation*}
k\cdot \vec N=k_g\vec Q+k_n\vec n
\end{equation*}

Here, $k_g$ denotes the geodesic curvature of $\gamma(\cdot,t)$ on the surface $S^2_{\sqrt{1-2t}}(0) $. $k_n$ denotes the normal curvature of the sphere. $\vec n$ is the inner normal unit vector of the surface and $\vec Q= \vec n \times \vec T$.

We can also write $\gamma(\cdot,t)$ as follows:

\begin{eqnarray}
\label{4.1}
\gamma(\cdot,t)=\sqrt{1-2t}\cdot\tilde{\gamma}(\cdot,t)
\end{eqnarray}

Now, $\tilde{\gamma} $ stays on the unit sphere, namely, $\parallel \tilde{\gamma}\parallel=1 $.

We insert (\ref{4.1}) into the original curve shortening equation, and we will get a new equation:

\begin{eqnarray}
\label{4.2}
\frac{-1}{\sqrt{1-2t}}\tilde{\gamma}+\sqrt{1-2t}\frac{\partial \tilde{\gamma}}{\partial t}=k_g\vec Q+k_n\vec n
\end{eqnarray}

Notice that $\tilde{\gamma} $ and $\vec n$ only differ by a sign. And for the sphere $S^2_{\sqrt{1-2t}}(0) $, $k_n=1/r=\frac{1}{\sqrt{1-2t}} $. Therefore, we have:

\begin{eqnarray}
\label{4.3}
\sqrt{1-2t}\cdot \frac{\partial \tilde{\gamma}}{\partial t}=k_g\cdot \vec Q
\end{eqnarray}

We also notice the following relation:

\begin{eqnarray}
\label{4.4}
\tilde{k_g}=\sqrt{1-2t}\cdot k_g
\end{eqnarray}

Combining (\ref{4.3})(\ref{4.4}), we have that:

\begin{eqnarray}
\label{4.5}
(1-2t)\cdot \frac{\partial}{\partial t}\tilde{\gamma}=\tilde{k_g}\cdot \vec{Q}
\end{eqnarray}

Let us dilate the time:

\begin{equation*}
\tilde{t}=-\frac{1}{2}\log(\frac{1}{2}-t)
\end{equation*}

Because $\gamma(\cdot,t)$ keeps on the sphere $S^2_{\sqrt{1-2t}}(0) $, the maximum existing time $T\le \frac{1}{2} $. Namely, $\log(\frac{1}{2}-t) $ is meaningful for $\forall t\in(0,T) $. We can easily get that:

\begin{eqnarray}
\label{4.7}
\frac{\partial}{\partial \tilde{t}}=(1-2t)\frac{\partial}{\partial t}
\end{eqnarray}

Taking (\ref{4.7}) into (\ref{4.5}), and noticing that $\tilde{\vec Q}=\vec Q $, we will have the following evolution equation for $\tilde{\gamma} $:

\begin{eqnarray}
\frac{\partial }{\partial \tilde{t}}{\tilde{\gamma}}=\tilde{k_g}\cdot\tilde{\vec Q}
\end{eqnarray}

Because $ \parallel \tilde{\gamma}\parallel=1 $, that is a curve shortening flow for a closed curve on $S^2_1(0) $.

Therefore, we can now apply the distance comparison argument in \cite{D&M}, or we can just use X.P.Zhu's theorem (\cite{Zhu98}). Then we have that $\tilde{\gamma} $ converges to a round point in the $C^\infty$ sense. Because $\tilde{\gamma} $ is just a homothety of $\gamma$, $\gamma$ converges to a round point.
\end{proof}

\end{document}